\def\theequation{\thesection.\@arabic\c@equation}
\renewcommand{\theequation}{\thesection.\arabic{equation}}
\newtheorem{lemma}{Lemma}[section]
\newtheorem{proposition}{Proposition}[section]
\newtheorem{corollary}{Corollary}[section]
\newtheorem{remark}{Remark}[section]
\newtheorem{theorem}{Theorem}[section]
\newtheorem{conjecture}{Conjecture}[section]
\newcommand{\F}{\frac}
\def\R{\mathbb{R}}
\title
{Infinitely many solutions to a fractional nonlinear Schr\"{o}dinger equation}
\author{  Liping Wang \footnotemark[2] \qquad and \qquad Chunyi Zhao\footnotemark[1]  \medskip \\
{\small Department of Mathematics,} \\ {\small Shanghai Key Laboratory of Pure Mathematics and Mathematical Practice, }\\{\small
East China Normal University, Shanghai, 200241, China} }
\begin{document}

\maketitle

\footnotetext[2]{lpwang@math.ecnu.edu.cn}
\footnotetext[1]{Corresponding author. cyzhao@math.ecnu.edu.cn }

\begin{abstract}
This paper considers the fractional Schr\"{o}dinger equation
\begin{equation}\label{abstract}
(-\Delta)^s u + V(|x|)u-u^p=0, \quad u>0, \quad u\in H^{2s}(\R^N)
\end{equation}
where $0<s<1$, $1<p<\frac{N+2s}{N-2s}$, $V(|x|)$ is a positive potential and $N\geq 2$.
We show that if $V(|x|)$ has the following expansion:
\[
V(|x|)=V_0 + \frac{a}{|x|^m} + o\left(\frac{1}{|x|^m}\right) \qquad \mbox{as} \ |x| \rightarrow +\infty,
\]
in which the constants are properly assumed,
then (\ref{abstract}) admits infinitely many non-radial solutions, whose energy can be made arbitrarily large. This is the first result for fractional Schr\"{o}dinger equation.
The $s=1$ case corresponds to the known result in Wei-Yan \cite{WY}.
\end{abstract}

{\bf Key Words.} Fractional Laplacian, fractional Schr\"{o}dinger equation, Lyapunov-Schmidt 

\section{Introduction and main results}
\setcounter{equation}{0}

The fractional Schr\"odinger equation is a fundamental equation of fractional quantum mechanics.
The nonlinear fractional nonlinear Schr\"{o}dinger equation is as follows:
\begin{equation}\label{sd}
i\psi_t=(-\Delta)^s\psi + \widetilde{V}(x) \psi -|\psi|^{p-1}\psi
\end{equation}
where $(-\Delta)^s$ ($0<s<1$) denotes the classical fractional Laplacian, $\tilde{V}$ is a bounded potential and $p>1$. \par

We are interested in finding {\it standing wave solutions}, which are
solutions of the form $\psi(x,t)=u(x)e^{i\lambda t}$ with the function $u$ real-valued. Let $V(x)=\widetilde{V}(x) + \lambda$, then $\psi$ is a solution of (\ref{sd}) if and only if $u$ solves the following
equation
\begin{equation}\label{fsd}
(-\Delta)^su + V(x)u -|u|^{p-1}u=0 \qquad \mbox{in }\R^N.
\end{equation}
A similar  problem to (\ref{fsd}) is the following fractional scalar field equation
\begin{equation}\label{sfe}
(-\Delta)^s u + u=Q(x)u^p, \qquad u>0 \qquad \mbox{in} \ \R^N.
\end{equation}
It is also absorbing to study the singularly perturbed problem
\begin{equation}\label{ssd}
\varepsilon^{2s}(-\Delta)^su + V(x)u -|u|^{p-1}u=0 \qquad \mbox{in} \ \R^N
\end{equation}
or
\begin{equation}\label{ssfe}
\varepsilon^{2s}(-\Delta)^s u + u=Q(x)u^p, \qquad u>0 \qquad \mbox{in}  \ \R^N
\end{equation}
where $\varepsilon>0$ is a small parameter. The natural place to look for solutions that decay at infinity is the space $H^{2s}(\R^N)$
of all functions $u\in L^2(\R^N)$ such that
\begin{equation*}
  \int_{\R^N} (1+|\xi|^{4s})|\hat u(\xi)|^2 \mathrm d\xi < \infty,
\end{equation*}
where $\widehat{}$ denotes the Fourier transform. The fractional Laplacian $(-\Delta)^s u$ for $u\in H^{2s}(\R^N)$ is defined by
\begin{equation*}
  \widehat{(-\Delta)^s u} (\xi) = |\xi|^{2s} \hat {u}(\xi) .
\end{equation*}\par

For (\ref{fsd})--(\ref{ssfe}), an interesting problem is to find solutions with a spike pattern concentrating around some points.
As for the standard case $s=1$ of (\ref{ssd}) or (\ref{ssfe}), this has been the topic of
many works relating the concentration points with critical points of the potential, starting in 1986 from the pioneering work Floer-Weinstein \cite{FW}.
Later many works show that the number of the critical points of $V(x)$ (or $Q(x)$) (see for example \cite{ABC, CNY1, CNY2, DF1, DF2, DF3, DF4, Oh,
Wang}), the type of the critical points of $V(x)$ (or $Q(x)$) (see for example \cite{DY, KW, NY,Z}), and the topology of the level set  $V(x)$ (or $Q(x)$) (see for example \cite{AMN1, AMN2, DLY, FM}), can
effect the number of solutions of (\ref{ssd}) (or (\ref{ssfe})).
It is now known that when the parameter $\varepsilon $ goes to zero, the number of the solutions may tend to infinity.
For the $s=1$ case of (\ref{fsd}) or (\ref{sfe}), in 2010 Wei-Yan \cite{WY} get a multiplicity result
under some symmetry assumption of $V(x)$ near the  infinity.
Recently we are told that del Pino-Wei-Yao \cite{DWY} get the similar result with a weaker symmetry assumption on $V(x)$.\par

As to the fractional case $0< s<1$, very few is known.  Recently D\'avila-del Pino-Wei \cite{DDW}
obtained the first result of spike pattern for the fractional Schr\"{o}dinger equation (\ref{ssd}) with $1 <p < \frac{N+2s}{N-2s}$.
{\it A natural question is can we get multiplicity result for (\ref{fsd}) (or(\ref{sfe})) with $0< s < 1$?
What is the situation in the fractional case?} In this paper we will give an affirmative answer! \par

This paper is concerned about the following fractional Laplacian problem
\begin{equation}\label{p}
(-\Delta)^su + V(|x|)u -|u|^{p-1}u=0, \quad u>0, \qquad u\in H^{2s}(\R^N)
\end{equation}
where $0<s<1$, $1<p<\frac{N+2s}{N-2s}$ and $N\geq 2$. We suppose that $V(x)$ satisfies the following assumption. \\
\textbf{Assumption $\bm{\mathcal V}$.}
$V$ is positive and radially symmetric, i.e. $V(x)=V(|x|)>0$ and there are constants $a>0$ and $V_0>0$ such that
\begin{equation}
V(|x|)=V_0 + \frac{a}{|x|^m} + o\left( \frac1{|x|^m}\right), \qquad \mbox{as} \ |x| \rightarrow +\infty,
\end{equation}
where
\begin{equation}\label{1.8}
\max\left\{0,\ (N+2s)\left[1-(p-1)N-2ps + \max\left\{s, \ p-\frac{N}2\right\}\right]\right\} < m < N+2s.
\end{equation}
Without loss of generality, we may assume $V_0=1$ for the sake of simplicity.\par
It's easy to see that
\[
\left[1-(p-1)N-2ps + \max\left\{s, \ p-\frac{N}2\right\}\right]<1 \qquad
\mbox{for  any } p>1, \ s \in (0, 1).
\]
By direct computations we find that in three dimension case, if
\[
1 + \frac{1-s}{3+2s}<p<\frac{3+2s}{3-2s}, \qquad \frac16 < s \le
\frac12,
\]
then we just need that $m\in (0, N+2s)$.

The aim of this paper is to obtain {\it infinitely many non-radial positive solutions} to (\ref{p}), {\it whose energy may be arbitrarily large}.
Our main result in this paper is stated in the following theorem.

\begin{theorem}\label{t1}
If $V(|x|)$ satisfies the assumption $\mathcal V$, then the problem (\ref{p}) admits infinitely many non-radial positive solutions. Moreover, the energy of these solutions may be arbitrarily large.
\end{theorem}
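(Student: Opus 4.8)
The plan is a finite--dimensional Lyapunov--Schmidt reduction on a polygonal concentration pattern, following the scheme of Wei--Yan \cite{WY} for $s=1$ but with the building bubble, the weighted norms and the linear theory adapted to the nonlocal operator. Let $U$ be the positive radial ground state of the limiting equation $(-\Delta)^sU+U=U^p$ in $\R^N$; it is unique up to translation, and by the work of Frank, Lenzmann and Silvestre it is \emph{nondegenerate}, meaning that the kernel of $L_0\phi:=(-\Delta)^s\phi+\phi-pU^{p-1}\phi$ in $H^s(\R^N)$ equals $\mathrm{span}\{\partial_{x_1}U,\dots,\partial_{x_N}U\}$; moreover $U(x)=\kappa|x|^{-(N+2s)}(1+o(1))$ as $|x|\to\infty$. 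Fix a large integer $k$, let $r$ be a parameter comparable to $k^{(N+2s)/(N+2s-m)}$ (note the exponent exceeds $1$ since $0<m<N+2s$), put
\[
x_j=r\Big(\cos\tfrac{2\pi j}{k},\ \sin\tfrac{2\pi j}{k},\ 0,\dots,0\Big),\qquad U_j:=U(\cdot-x_j),\qquad W=W_{r,k}:=\sum_{j=1}^{k}U_j ,
\]
and work in the subspace $\mathcal H_k\subset H^s(\R^N)$ of functions invariant under the rotation by $2\pi/k$ in the $(x_1,x_2)$--plane and even in each of $x_2,x_3,\dots,x_N$. In this symmetry class the approximate kernel reduces to the single ``breathing'' direction $Z:=\partial_rW$, which is what makes the reduced problem one--dimensional.

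First I would estimate the error $E:=(-\Delta)^sW+VW-W^p$ of the ansatz. Since every $U_j$ solves the limiting equation, $E=(V-1)W+\big(\sum_jU_j^p-W^p\big)$: the first term is governed by the potential tail and has size $\sim r^{-m}$ near the bubbles, while the second comes from the overlap of neighbouring bubbles and has size $\sim(k/r)^{N+2s}$. All such quantities are measured in a suitable pair of weighted $L^\infty$ norms $\|\cdot\|_{*}$, $\|\cdot\|_{**}$ built from the sums $\sum_j(1+|x-x_j|)^{-\tau}$, which encode the polynomial decay of $U$. Next comes the linear theory: for $L_{r,k}\phi:=(-\Delta)^s\phi+V\phi-pW^{p-1}\phi$ restricted to $\mathcal H_k\cap\{\phi\perp Z\}$ one proves a uniform invertibility estimate $\|L_{r,k}\phi\|_{**}\gtrsim\|\phi\|_{*}$ by the standard blow--up/contradiction argument: a hypothetical degenerating sequence concentrates, after translation, near one of the $x_j$, the limit lies in $\ker L_0$, the orthogonality to $Z$ together with the symmetry of $\mathcal H_k$ forces it to vanish, and the long--range tails produced by $(-\Delta)^s$ are absorbed using the decay of $U$ and a comparison argument for the fractional Laplacian (or the Caffarelli--Silvestre extension). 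A contraction mapping then produces, for each admissible $r$, a unique small $\phi=\phi_{r,k}\in\mathcal H_k$ orthogonal to $Z$ solving the projected equation, with $\|\phi_{r,k}\|_{*}\lesssim\|E\|_{**}$.

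It then remains to solve the one--dimensional reduced problem: by the usual argument, if $r_\ast$ is a critical point of the reduced energy $F(r):=I(W_{r,k}+\phi_{r,k})$ ($I$ the action functional associated with \eqref{p}) lying in the interior of an admissible interval $\big[r_0k^{(N+2s)/(N+2s-m)},\,r_1k^{(N+2s)/(N+2s-m)}\big]$, then $W_{r_\ast,k}+\phi_{r_\ast,k}$ solves \eqref{p} (and is positive after the standard maximum principle argument). Using the equation for $\phi_{r,k}$ and the error estimate, $F(r)=I(W_{r,k})$ up to lower order, and expanding $I(W_{r,k})$ via the asymptotics $\int U_i^pU_j=\kappa\|U\|_{L^p}^p|x_i-x_j|^{-(N+2s)}(1+o(1))$ and $\int(V-1)U_j^2=a\|U\|_{L^2}^2r^{-m}(1+o(1))$, and summing the (convergent, since $N+2s>1$) series of nearest--neighbour contributions around the polygon, one gets
\[
F(r)=k\,I_0(U)+k\Big(\frac{a\,B_1}{r^m}-\frac{B_2\,k^{N+2s}}{r^{N+2s}}\Big)+\text{lower order},
\]
with $B_1,B_2>0$ universal and $I_0$ the limiting action. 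Since $a>0$ and $m<N+2s$, the bracketed function of $r$ increases for small $r$ and decreases for large $r$, hence has a strict interior maximum at $r\sim k^{(N+2s)/(N+2s-m)}$; choosing $r_0,r_1$ on either side of that value, the maximum of $F$ over the interval is interior and is the sought critical point. Letting $k\to\infty$ produces infinitely many solutions $u_k$; for $k$ large $u_k$ is a sum of $k$ well--separated bumps on a regular polygon, hence non--radial, and $F(r_\ast)\sim k\,I_0(U)\to\infty$, so the energies are unbounded --- which is the assertion of Theorem \ref{t1}.

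The principal obstacle, and the place where \eqref{1.8} enters, is the combination of nonlocality with slow decay. For $s=1$ the bubbles decay exponentially, inter--bubble interactions and the induced remainder are exponentially small, and more or less any reasonable norm closes the scheme; here the interaction decays only like $|x_i-x_j|^{-(N+2s)}$, so $\phi_{r,k}$ and $E$ carry genuine polynomial tails that have to be tracked, the a priori estimate for $L_{r,k}$ must control the long--range coupling caused by $(-\Delta)^s$, and the energy expansion needs \emph{sharp}, not merely qualitative, asymptotics of $\int U_i^pU_j$. The full content of hypothesis \eqref{1.8}, in particular the lower bound on $m$, is exactly the quantitative requirement that the perturbation $\phi_{r,k}$ and all higher--order remainders be of strictly smaller order than the two competing main terms $r^{-m}$ and $k^{N+2s}r^{-(N+2s)}$ in $F$, so that the interior maximum --- and therefore the solution --- persists.
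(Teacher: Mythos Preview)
Your proposal is correct and follows essentially the same route as the paper: the same polygonal ansatz $W=\sum_j U(\cdot-x_j)$ with $r\sim k^{(N+2s)/(N+2s-m)}$, the same symmetry space, the same weighted $L^\infty$ linear theory proved by blow--up and nondegeneracy, the same contraction for the projected nonlinear problem, and the same one--dimensional reduced functional $F(r)=k\big[I_0(U)+B_1r^{-m}-B_2(k/r)^{N+2s}+o(r^{-m})\big]$ whose interior maximum yields the solution. Your identification of the role of the lower bound in \eqref{1.8}---to force $\|\phi\|_*$ and all remainders to be $o(r^{-m})$ so the two leading terms survive---is exactly what the paper uses.
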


\begin{remark}
The condition on potential $V(x)$ is  more general  than that
of $V$ in \cite{WY} for $s=1$.  The main reason is that in our case
we can deduce the exact relationship between the radius and the
number of spikes, while in \cite{WY}, the authors can't solve it
exactly using the leading terms of energy.
\end{remark}

We believe that the symmetry on $V$ is technical and then make the following conjecture.

\begin{conjecture}
Problem (\ref{p}) has infinitely many solutions if there are constants $a>0, m \in(0, N+2s)$ and $V_0>0$, such that
\[
V(x)=V_0 + \frac{a}{|x|^m} + o\left( \frac1{|x|^m}\right), \qquad \mbox{as} \ |x| \rightarrow +\infty.
\]
\end{conjecture}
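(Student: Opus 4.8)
The plan is to use the Lyapunov--Schmidt reduction method, building solutions that look like a superposition of many copies of the ground state placed at the vertices of a regular polygon (or its analogue in $\R^N$, a polygon lying in a coordinate plane). Let $W$ denote the unique positive radial ground state of $(-\Delta)^sW+W-W^p=0$ on $\R^N$, whose existence, nondegeneracy and exponential decay are by now classical (Frank--Lenzmann--Silvestre and related work); write $W_{x_0}(x)=W(x-x_0)$. For an integer $k$ (to be taken large) and a radius $r$ in a suitable range of the form $r\sim(k\log k)/(2\pi)\cdot(1+o(1))$, set $x_j = r\bigl(\cos\tfrac{2\pi j}{k},\sin\tfrac{2\pi j}{k},0,\dots,0\bigr)$ for $j=1,\dots,k$, and take as approximate solution $Z_{r,k}=\sum_{j=1}^k W_{x_j}$. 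Working in the space $H_s$ of functions invariant under the symmetries of this configuration (rotations by $2\pi/k$ in the first two variables, reflections, and the orthogonal group acting on the remaining $N-2$ variables), one kills most of the kernel of the linearized operator; the remaining approximate kernel is spanned by the derivatives of $Z_{r,k}$ in the single parameter $r$. The first main step is the gluing/estimate step: prove that the linearized operator $L_{r,k}$ around $Z_{r,k}$, restricted to the orthogonal complement of $\partial_r Z_{r,k}$ in $H_s$, is uniformly invertible with inverse bounded independently of $k$; here the nonlocality of $(-\Delta)^s$ is the new difficulty, so I would work with the Caffarelli--Silvestre extension, turning the problem into a degenerate-elliptic local problem on $\R^{N+1}_+$, and use the exponential decay of $W$ plus the interaction estimates $\int W_{x_i}^{p}W_{x_j}\sim e^{-|x_i-x_j|}$ to control the error terms.

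The second step is to solve, for each admissible $(r,k)$, the infinite-dimensional (orthogonal) part of the equation by the contraction mapping theorem, producing $\phi=\phi_{r,k}$ small in $H_s$ with $\|\phi_{r,k}\|\to0$ as $k\to\infty$, so that $Z_{r,k}+\phi_{r,k}$ solves the equation up to a Lagrange multiplier times $\partial_r Z_{r,k}$. The third step is the finite-dimensional reduction: define the reduced energy $F(r,k)=I(Z_{r,k}+\phi_{r,k})$ where $I$ is the energy functional associated to (\ref{p}), and expand it. The leading-order expansion should read, schematically,
\begin{equation*}
F(r,k)=k\Bigl(A + \frac{B\,a}{r^m} - C\,e^{-\tfrac{2\pi r}{k}(1+o(1))} + \text{(lower order)}\Bigr),
\end{equation*}
where $A=I(W)>0$ is the ground-state energy, $B>0$ comes from testing the potential term $\int \tfrac{a}{|x|^m}W_{x_j}^2$ against one bump (this is where the range (\ref{1.8}) on $m$ enters: $m<N+2s$ makes this term dominate the nonlinear perturbation coming from $V_0=1$ being only an asymptotic value, while the lower bound on $m$ guarantees the potential term dominates other error contributions), and $-C\,e^{-2\pi r/k}$ is the (negative, attractive) interaction energy between neighboring bumps.

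The final step is to maximize $F(r,k)$ in $r$ over the admissible interval for each fixed large $k$: the competition between the increasing-in-$r$ interaction term $-Ce^{-2\pi r/k}$ (less negative as $r$ grows) and the decreasing-in-$r$ potential term $Ba/r^m$ forces an interior critical point $r_k$ with $r_k\sim \tfrac{k}{2\pi}\log\tfrac{k}{2\pi}$ up to lower order, and a standard degree/variational argument (the reduced functional attains its maximum in the interior because it is smaller at both endpoints of the admissible range) yields a genuine critical point $(r_k,k)$. One then checks that at such a critical point the Lagrange multiplier vanishes, so $u_k=Z_{r_k,k}+\phi_{r_k,k}$ is an honest solution of (\ref{p}); positivity follows from the maximum principle for $(-\Delta)^s+1$ together with $\|\phi_{r_k,k}\|\to0$, non-radiality is clear since $u_k$ has exactly $k$ bumps arranged on a polygon, and the energy $F(r_k,k)\sim kA\to\infty$ gives the "arbitrarily large energy" assertion, as well as the fact that different $k$ give geometrically distinct solutions (so there are infinitely many).

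The main obstacle, and the genuinely new part compared with the $s=1$ case of Wei--Yan, is the invertibility estimate for the nonlocal linearized operator uniformly in $k$: one must show that the nonlocal interactions among $k$ well-separated bumps do not accumulate to destroy coercivity. I expect to handle this via the extension problem and a blow-up/contradiction argument — assume a sequence $\phi_n$ with $\|\phi_n\|=1$, $L_{r_n,k_n}\phi_n\to0$, orthogonal to $\partial_r Z$, then localize near the bump where $\|\phi_n\|$ is not small, pass to the limit to get a nontrivial element of the kernel of the linearization at $W$ in the extension sense, contradict nondegeneracy of $W$; the care needed is in showing the nonlocal "tails" from the other $k-1$ bumps genuinely vanish in the limit, which is where the precise decay rate of $W$ and the lower bound on $m$ in (\ref{1.8}) get used.
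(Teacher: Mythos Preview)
The statement you are trying to prove is a \emph{conjecture} in the paper, not a theorem; the authors explicitly present it as open, the point being that it drops the radial symmetry assumption on $V$ (and the lower bound in (\ref{1.8}) on $m$). Your proposal does not address this at all: you work in the symmetry space $H_s$ of functions invariant under rotations by $2\pi/k$ and reflections, which is exactly the device the paper uses to prove the weaker Theorem~\ref{t1}. That device only makes sense if the equation itself is invariant under those symmetries, i.e.\ if $V$ is radial. Without radial $V$, the nonlinear problem does not restrict to $H_s$, the reduction to a single parameter $r$ collapses, and one is back to a $k$-dimensional (at least) reduced problem whose critical points are not guaranteed by any simple max argument. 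This is the whole content of the conjecture, and your outline simply reproduces the proof strategy of Theorem~\ref{t1} rather than attacking it.

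There is also a substantive technical error that would derail even a re-proof of Theorem~\ref{t1}. You assume the ground state $W$ decays exponentially and write the interaction as $\int W_{x_i}^pW_{x_j}\sim e^{-|x_i-x_j|}$, with reduced energy containing $-Ce^{-2\pi r/k}$ and optimal radius $r_k\sim \tfrac{k}{2\pi}\log k$. For the fractional Laplacian with $0<s<1$ this is false: the ground state has \emph{polynomial} decay $w(|x|)=A|x|^{-(N+2s)}(1+o(1))$ (see (\ref{expw}) and \cite{FLS}). Consequently the interaction term is $\sim |q_i-q_j|^{-(N+2s)}$, the reduced energy has the form $k\bigl(A_1+B_1 r^{-m}-B_2(k/r)^{N+2s}+o(\cdot)\bigr)$, and the balance occurs at $r\sim k^{(N+2s)/(N+2s-m)}$, not $k\log k$. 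This polynomial decay is in fact the source of most of the delicacy in the paper (cf.\ Remark~1.3 and the choice of the weighted norm $\|\cdot\|_*$), and getting it wrong invalidates your error estimates, your energy expansion, and your localization argument for the linearized operator.
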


\begin{remark}
Using the same argument, we can prove that if
\[
Q(|x|)=Q_0 - \frac{a}{|x|^m}+ o\left( \frac1{|x|^m}\right) \qquad \mbox{as} \ |x| \rightarrow +\infty,
\]
where the constants are similarly assumed, then  problem (\ref{sfe}) has infinitely many positive non-radial solutions.
\end{remark}

Before close this introduction, let us outline the main idea in the proof of Theorem \ref{t1}.
Our aim is to construct solutions with a large number of bumps near the infinity. Since
\[
\lim_{|x| \rightarrow +\infty} V(|x|)=1,
\]
we will use the solution of
\begin{equation}\label{p1}
(-\Delta)^su +u -|u|^{p-1}u=0, \qquad u>0, \qquad u\in H^{2s}(\R^N)
\end{equation}
to build up the approximate solution for problem (\ref{p}).  It is known (see for instance \cite{FLS}) the existence of a positive, radial least energy solution $w(x)$, which gives the lowest possible value for the energy
\[
J_1(v)=\frac12\int_{\R^N} v(-\Delta)^s v + \frac12\int_{\R^N} v^2 - \frac1{p+1}\int_{\R^N} |v|^{p+1}
\]
among all nontrivial solutions of (\ref{p1}).  An important property, which has  been proven recently by Frank-Lenzman-Silvestre \cite{FLS} (see also \cite{AT, FL}), is that there exists a radial
least energy solution which is non-degenerate, in the sense that the space of solutions of the equation
\begin{equation}
(-\Delta)^s \phi + \phi -pw^{p-1}\phi=0, \qquad \phi \in H^{2s}(\R^N)
\end{equation}
consists exactly of the linear combinations of the translation-generators $\frac{\partial w}{\partial x_j}, j=1, \ldots, N.$ Also we have the following  behavior for $w(x)$ (\cite{FLS}):
\begin{equation}\label{expw}
 w'(|x|)<0; \qquad w(|x|)=\frac{A}{|x|^{N+2s}}\left(1+o(1)\right), \qquad A>0, \qquad \mbox{ as} \quad  |x| \rightarrow +\infty.
\end{equation}

Let
\[
 q_j = \left(r\cos\frac{2(j-1)\pi}{k}, r\sin\frac{2(j-1)\pi}{k}, \bm{0}\right), \qquad j=1, \ldots, k,
\]
where $\bm 0$ is the zero vector in $\R^{N-2}, r \in \left[\frac1{C_0}k^{\frac{N+2s}{N+2s-m}}, C_0k^{\frac{N+2s}{N+2s-m}} \right]$ for large positive constant $C_0$ independent of $k$.

 Set $x=(x', x'')$, $x \in \R^2$, $x'' \in \R^{N-2}$. Define
\begin{multline*}
H_s=\Big\{u ~\Big|~ u \in H^{2s}(\R^N),\ u \ \mbox{is even in}\ x_h\ (h=2, \ldots, N)\ \text{and}\\
 u\left(r\cos\theta, r\sin\theta, x''\right)=u\left(r\cos\left(\theta + \frac{2\pi j}{k}\right), r\sin\left(\theta + \frac{2\pi j}{k}\right), x''\right),\ j=1, \ldots, k-1   \Big\}.
\end{multline*}
Define
\[
W(x)=\sum\limits_{j=1}^k w(x-q_j),
\]
then Theorem \ref{t1} is a direct consequence of the following result.

\begin{theorem}\label{t2}
Suppose $V(|x|)$ satisfies the assumption $\mathcal V$. Then there is an integer $k_0>0$, such that for any integer $k \ge k_0$,
Problem (\ref{p}) has a solution $u_k$ of the form
\[
u_k(x) =W(x) + \varphi(x),
\]
where $\varphi(x) \in H_s$ and the energy at $u_k$  goes to infinity as $k $ goes to infinity.
\end{theorem}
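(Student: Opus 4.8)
The plan is to use the Lyapunov–Schmidt reduction method in the spirit of Wei–Yan \cite{WY}, adapted to the nonlocal setting using the non-degeneracy and decay estimates for $w$ quoted above. First I would set up the functional-analytic framework: let the energy functional associated to \eqref{p} be
\[
I(u)=\frac12\int_{\R^N} u(-\Delta)^su + \frac12\int_{\R^N} V(|x|)u^2 - \frac1{p+1}\int_{\R^N}|u|^{p+1},
\]
and seek a solution of the form $u_k=W+\varphi$ with $\varphi$ in the symmetric space $H_s$. The symmetry class $H_s$ forces the $k$ bumps to remain equidistributed on the circle of radius $r$ and to remain ordered along the last $N-2$ coordinates, so that the only remaining degree of freedom is the single scalar parameter $r$; this is what reduces the problem to one equation in one unknown. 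I would write the equation $I'(W+\varphi)=0$ as the system
\[
P_r^\perp\bigl(I'(W+\varphi)\bigr)=0, \qquad P_r\bigl(I'(W+\varphi)\bigr)=0,
\]
where $P_r$ is the projection onto the span of $\partial W/\partial r$ (inside $H_s$), the only admissible kernel direction.

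The second step is the linear theory. Linearizing $I''(W)$ on $H_s$, the leading operator near each bump is $(-\Delta)^s+1-pw^{p-1}$, whose kernel in $H^{2s}(\R^N)$ is exactly $\mathrm{span}\{\partial_j w\}$ by the Frank–Lenzmann–Silvestre non-degeneracy result; the symmetry constraints in $H_s$ kill all of these directions except the one corresponding to the radial dilation of the configuration. Hence, after restricting to $P_r^\perp H_s$, the operator $P_r^\perp I''(W) P_r^\perp$ is invertible with inverse bounded by a constant independent of $k$ (for $k$ large), provided the bumps are far apart — which is guaranteed by $r\sim k^{(N+2s)/(N+2s-m)}\to\infty$ and the mutual distance $|q_i-q_j|\gtrsim r/k\to\infty$. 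The main technical point here, and the part I expect to be the chief obstacle, is controlling the nonlocal interaction terms: unlike the case $s=1$, the operator $(-\Delta)^s$ is not local, so the cross terms $\int w(x-q_i)(-\Delta)^s w(x-q_j)$ and the error $(-\Delta)^s W-\sum(-\Delta)^s w(\cdot-q_j)$ do not vanish by support considerations and must be estimated purely through the polynomial decay $w(x)\sim A|x|^{-(N+2s)}$ in \eqref{expw}; one needs sharp bounds of the form $\sum_{j\ge 2}\|w(\cdot-q_1)w(\cdot-q_j)\|\lesssim (k/r)^{N+2s}$ and similar, and these feed directly into the admissible range \eqref{1.8} for $m$.

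The third step is the finite-dimensional reduction and energy expansion. Using the contraction mapping principle with the uniform linear estimate, I would solve the infinite-dimensional part first, obtaining for each admissible $r$ a unique small $\varphi=\varphi(r)\in P_r^\perp H_s$ with $\|\varphi(r)\|\le C k\,(\text{error size})$, depending smoothly on $r$. Substituting back, the reduced energy $F(r):=I(W+\varphi(r))$ admits an expansion of the form
\[
F(r)=k\Bigl(E_0 + a\,\beta\,\frac{1}{r^m} - \gamma\,\bigl(\tfrac{k}{r}\bigr)^{N+2s} + (\text{lower order})\Bigr),
\]
where $E_0=J_1(w)>0$, and $\beta,\gamma>0$ come respectively from the potential expansion in Assumption $\mathcal V$ and from the sum of pairwise bump interactions $\int w^p(\cdot-q_1)w(\cdot-q_j)$. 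The restriction \eqref{1.8} on $m$ is exactly what makes the two competing terms $r^{-m}$ and $(k/r)^{N+2s}$ balance at $r\sim k^{(N+2s)/(N+2s-m)}$ while dominating all remaining error terms. Maximizing $F(r)$ over the compact interval $r\in[C_0^{-1}k^{(N+2s)/(N+2s-m)},C_0 k^{(N+2s)/(N+2s-m)}]$ yields an interior critical point $r_k$ — here one checks the maximum is not attained at the endpoints by comparing the values of the leading terms with $C_0$ chosen large. A critical point of $F$ produces a genuine solution of $P_r(I'(W+\varphi))=0$ by the standard variational argument (the kernel direction is one-dimensional), hence $u_k=W+\varphi(r_k)$ solves \eqref{p}. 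Positivity follows from the maximum principle for $(-\Delta)^s+V$ together with $u_k>0$ being forced by $W>0$ dominating the small remainder; non-radiality is clear since $u_k$ concentrates on $k\ge k_0\ge 2$ distinct points; and $F(r_k)\sim k E_0\to\infty$ gives the energy blow-up. I would close by noting that Theorem \ref{t1} follows immediately since distinct large $k$ give geometrically distinct solutions.
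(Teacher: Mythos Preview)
Your proposal is correct and follows essentially the same Lyapunov--Schmidt strategy as the paper: solve the projected problem by contraction using a uniform linear estimate (the paper does this in a weighted $L^\infty$ norm $\|\cdot\|_*$ built from $\rho(x)=\sum_j(1+|x-q_j|)^{-\mu}$), expand the reduced energy $F(r)=J(W+\varphi(r))$ to leading order $k\bigl(A_1+B_1 r^{-m}-B_2(k/r)^{N+2s}+o(\cdot)\bigr)$, and locate an interior maximum on $[C_0^{-1}k^{\frac{N+2s}{N+2s-m}},\,C_0 k^{\frac{N+2s}{N+2s-m}}]$. One small slip: the quantity $(-\Delta)^s W-\sum_j(-\Delta)^s w(\cdot-q_j)$ is identically zero by linearity, so the genuine error to estimate is $E=(V-1)W+W^p-\sum_j W_j^p$, and the polynomial-decay interaction bounds you describe are exactly what control this $E$ and the energy cross-terms.
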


\begin{remark}
Note that there is no parameter in the problem (\ref{p}). Using the number of spikes as parameter, we  get the {\bf{ first multiplicity result}} for fractional nonlinear Schr\"{o}dinger equation,
{\bf which seems a  new phenomenon} for fractional nonlinear Schr\"{o}dinger equation.
\end{remark}

\begin{remark}
Since  the approximate solution has polynomial decay, we should deal with every term carefully in the calculous which makes our proof a little bit complicated. By the way, in \cite{WY}, the approximation has exponential decay.
\end{remark}

The paper is organized as follows. In Section \ref{s2}, we introduce some preliminaries. In Section \ref{s3}, the ansatz is established. In Section \ref{s4}, we deal with the corresponding
linearized problem. In Section \ref{s5}, the nonlinear problem is considered and the proof of Theorem \ref{t2} is given. Finally some important estimates and the expansion of the energy are stated in Section
\ref{s6}.\par

\textbf{Notations.} In what follows, the symbol $C$ always denotes a various constant independent of $k$.

\section{Preliminaries}\label{s2}
\setcounter{equation}{0}

In this section, we get a useful a-priori estimate for a related linear equation.

Let $0 < s <1$. Various definitions of the fractional Laplacian $(-\Delta)^s \varphi$ of a function $\varphi$ defined in $\R^N$ are available, depending on its regularity and growth properties, see
for example \cite{DDW}. A useful (local) representation given by Caffarelli and Silvestre \cite{CS}, is via the following boundary value problem in the half space $\R_+^{N+1}=\{(x, y)~|~ x\in \R^N, y>0 \}$:
\[
\nabla \cdot \left(y^{1-2s} \nabla \tilde{\varphi}\right)=0 \quad \mbox{in} \ \R_+^{N+1}, \qquad \tilde{\varphi}(x, 0)=\varphi(x) \quad \mbox{on} \ \R^N.
\]
Here $\tilde{\varphi}$ is the $s$-harmonic extension of $\varphi$, explicitly given as a convolution integral with the $s$-Poisson kernel $p_s(x, y)$,
\[
\tilde{\varphi}(x, y)=\int_{\R^N} p_s(x-z, y)\varphi(z) dz,
\]
where
\[
p_s(x, y)=c_{N,s}\frac{y^{4s-1}}{(|x|^2 + |y|^2)^{\frac{N-1+4s}{2}}}
\]
and $c_{N,s}$ achieves $\int_{\R^N} p_s(x, y)dx=1$. Then under suitable regularity, $(-\Delta)^s \varphi$ is the Dirichlet-to-Neumann map for this problem, that is
\begin{equation}\label{DN}
(-\Delta)^s \varphi(x) =\lim_{y \rightarrow 0^+} y^{1-2s}\partial_y \tilde{\varphi}(x,y).
\end{equation}

For $m>0$ and $g \in L^2(\R^N)$, let us consider now  the equation
\[
(-\Delta)^s \varphi+m \varphi =g \qquad \mbox{in} \ \R^N.
\]
Then in terms of Fourier transform, for $\varphi \in L^2(\R^N)$, this problem reads
\[
\left(|\xi|^{2s} + m   \right)\hat{\varphi}=\hat{g}
\]
and has a unique solution $\varphi \in H^{2s}(\R^N)$ given by the convolution
\begin{equation}\label{conv}
\varphi(x)=T_m(g):=\int_{\R^N} k(x-z)g(z)dz
\end{equation}
where the Fourier transform of $k$ is
\[
\hat{k}(\xi) =\frac{1}{|\xi|^{2s}+m}.
\]
Then we have the following  main properties of the fundamental solution $k(x)$ (see for example \cite{FLS, FQT}): $k(x)$ is radially symmetric and positive, $k \in C^{\infty}(\R^N
\setminus \{0\})$ satisfying\\
\[
\begin{split}
&(i) \qquad |k(x)| + |x||\nabla k(x)| \le \frac{C}{|x|^{N-2s}} \qquad \quad \mbox{for all} \quad |x| \le 1;\\
&(ii) \qquad \lim_{|x|\rightarrow \infty} k(x)|x|^{N+2s}=\alpha >0;\\
&(iii) \qquad  |x||\nabla k(x)| \le \frac{C}{|x|^{N+2s}} \qquad \qquad \qquad \mbox{for all} \quad |x| \ge 1.
\end{split}
\]

Using (\ref{DN}) written in weak form, $\varphi$ can be characterized by $\varphi(x) =\tilde{\varphi}(x, 0)$ in trace sense, where $\tilde{\varphi} \in H$ is the unique solution of
\begin{equation}\label{ws}
\iint_{\R_+^{N+1}} \nabla \tilde{\varphi}(x, y) \cdot \nabla \phi(x, y) y^{1-2s}\mathrm dx\mathrm dy + m\int_{\R^N} \varphi(x) \phi(x,0) dx=\int_{\R^N} g(x)\phi(x,0)dx,
\end{equation}
for all $ \phi \in H$, where $H$ is the Hilbert space of functions $\phi \in H^1_\text{loc}(\R_+^{N+1})$ such that
\[
\|\phi\|_H^2:=\iint_{\R_+^{N+1}} | \nabla \phi(x, y)|^2 y^{1-2s}dxdy + m\int_{\R^N}  |\phi(x,0)|^2 dx < +\infty,
\]
or equivalent the closure of the set of all functions in $C_c^\infty(\overline{\R_+^{N+1}})$ under this norm.

For our purpose, we need the following four lemmas, see \cite{DDW}:
\begin{lemma}\label{2.1}
Let $g \in L^2(\R^N)$. Then the unique solution $\tilde{\varphi} \in H$ of the problem (\ref{ws}) is given by the s-harmonic extension of the function $\varphi=T_m(g)$.
\end{lemma}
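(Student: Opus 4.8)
The plan is to verify directly that the $s$-harmonic extension $\tilde\varphi$ of $\varphi = T_m(g)$ lies in the Hilbert space $H$ and satisfies the weak formulation \eqref{ws}, and then invoke uniqueness via the coercivity of the associated bilinear form. First I would record the Fourier-side description: since $\widehat{\varphi}(\xi) = (|\xi|^{2s}+m)^{-1}\widehat g(\xi)$ with $g \in L^2(\R^N)$, Plancherel gives $\varphi \in H^{2s}(\R^N)$, and in particular $\varphi \in L^2(\R^N)$ so the last term in the $H$-norm is finite. For the Dirichlet energy term, I would use the explicit Poisson-kernel representation of the extension, or equivalently the Fourier representation $\widehat{\tilde\varphi}(\xi,y) = \widehat\varphi(\xi)\,\theta(|\xi| y)$ for the appropriate profile $\theta$ solving the degenerate ODE, and compute
\[
\iint_{\R_+^{N+1}} |\nabla \tilde\varphi|^2 y^{1-2s}\,\mathrm dx\,\mathrm dy = \kappa_s \int_{\R^N} |\xi|^{2s} |\widehat\varphi(\xi)|^2 \,\mathrm d\xi,
\]
which is finite because $\varphi \in H^{2s} \subset H^s$. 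This shows $\tilde\varphi \in H$.

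Next I would check the weak identity. The key point is the Caffarelli–Silvestre relation \eqref{DN}: the extension $\tilde\varphi$ is $y^{1-2s}$-harmonic in $\R_+^{N+1}$, so integrating by parts against any test function $\phi \in C_c^\infty(\overline{\R_+^{N+1}})$ gives
\[
\iint_{\R_+^{N+1}} \nabla\tilde\varphi \cdot \nabla\phi\, y^{1-2s}\,\mathrm dx\,\mathrm dy = -\int_{\R^N} \Big(\lim_{y\to 0^+} y^{1-2s}\partial_y\tilde\varphi\Big)\phi(x,0)\,\mathrm dx = -\int_{\R^N} (-\Delta)^s\varphi\,(x)\,\phi(x,0)\,\mathrm dx,
\]
up to the normalization constant, which I would absorb (or track explicitly, consistent with the constant in the definition of the $H$-norm). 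Since $(-\Delta)^s\varphi + m\varphi = g$ by construction of $T_m$, substituting gives exactly \eqref{ws}. A density argument extends the identity from $C_c^\infty(\overline{\R_+^{N+1}})$ to all $\phi \in H$, using that the trace operator $\phi \mapsto \phi(\cdot,0)$ is bounded from $H$ into $H^s(\R^N) \hookrightarrow L^2(\R^N)$, so both sides of \eqref{ws} are continuous in $\phi$ with respect to the $H$-norm.

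Finally, uniqueness: the bilinear form on the left-hand side of \eqref{ws} is exactly $\langle\cdot,\cdot\rangle$ associated to $\|\cdot\|_H^2$, hence coercive and bounded on $H$, and the right-hand side is a bounded linear functional of $\phi$ (again by the trace inequality and $g \in L^2$); so Lax–Milgram yields a unique solution in $H$, which must therefore coincide with $\tilde\varphi$. The main obstacle is the careful justification of the boundary term — namely that the distributional normal derivative $\lim_{y\to 0^+} y^{1-2s}\partial_y\tilde\varphi$ genuinely equals $(-\Delta)^s\varphi$ in the right functional sense and that the integration by parts is licit despite the degenerate weight $y^{1-2s}$ and the unboundedness of the domain; this is precisely the content of the Caffarelli–Silvestre theory, and I would cite \cite{CS, DDW} for the rigorous version rather than re-derive it. Everything else is a routine combination of Plancherel, the trace inequality, and Lax–Milgram.
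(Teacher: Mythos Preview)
Your sketch is correct and is essentially the standard argument one finds in the Caffarelli--Silvestre literature. Note, however, that the paper does not actually prove Lemma~\ref{2.1}: it is stated together with Lemmas~\ref{2.2}--\ref{2.4} under the blanket phrase ``For our purpose, we need the following four lemmas, see \cite{DDW}'', so the paper's ``proof'' is simply a citation. Your outline (Fourier check that $\tilde\varphi\in H$, integration by parts using the degenerate-harmonicity of the extension to recover the Dirichlet-to-Neumann identity, density, and Lax--Milgram for uniqueness) is exactly the content one would find behind that citation, and you correctly flag the only delicate point---the rigorous justification of the boundary limit \eqref{DN} and the associated normalization constant---as something to be taken from \cite{CS, DDW} rather than reproved. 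There is nothing to correct; if anything, you have supplied more than the paper does.
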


\begin{lemma}\label{2.2}
Let $0 \le \mu < N+2s$. Then there exists a  positive constant $C$ such that
\[
\|(1 + |x|)^\mu T_m(g)\|_{L^\infty(\R^N)} \le C \|(1 + |x|)^\mu g\|_{L^\infty(\R^N)}.
\]
\end{lemma}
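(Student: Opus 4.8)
\bigskip

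The plan is to estimate the convolution $T_m(g)(x)=\int_{\R^N} k(x-z)g(z)\,dz$ pointwise, exploiting the two-regime decay of the kernel $k$ recorded just above: $|k(y)|\le C|y|^{-(N-2s)}$ for $|y|\le 1$ and $|k(y)|\le C|y|^{-(N+2s)}$ for $|y|\ge 1$ (the latter following from property (ii)). Writing $M:=\|(1+|x|)^\mu g\|_{L^\infty}$, so that $|g(z)|\le M(1+|z|)^{-\mu}$, I would fix $x$ and split the integral into the near region $\{|x-z|\le 1\}$ and the far region $\{|x-z|\ge 1\}$. On the near region $k$ is only weakly singular (the exponent $N-2s<N$ is integrable), and since $|z|$ is comparable to $|x|$ there, one gets a contribution bounded by $C M (1+|x|)^{-\mu}$. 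The far region is where the hypothesis $\mu<N+2s$ is used: there $|k(x-z)|\le C|x-z|^{-(N+2s)}$, and one must bound $\int_{|x-z|\ge 1} |x-z|^{-(N+2s)} (1+|z|)^{-\mu}\,dz$ by $C(1+|x|)^{-\mu}$.

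\bigskip

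For the far-region estimate I would further decompose according to whether $z$ is near the origin or near $x$. More precisely, on $\{|z|\le |x|/2\}$ one has $|x-z|\gtrsim |x|$, so the kernel contributes $|x|^{-(N+2s)}$ and the remaining integral $\int_{|z|\le|x|/2}(1+|z|)^{-\mu}\,dz$ is at most $C(1+|x|)^{N-\mu}$ when $\mu<N$ and at most a constant (times $\log$ if $\mu=N$) when $\mu\ge N$; in either case the product is $\le C(1+|x|)^{-\mu}$ using $\mu<N+2s$. On $\{|z|\ge|x|/2\}$ one has $(1+|z|)^{-\mu}\le C(1+|x|)^{-\mu}$, which pulls the weight out of the integral, leaving $\int_{|x-z|\ge1}|x-z|^{-(N+2s)}\,dz\le C$. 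Combining the near region and both pieces of the far region yields $|T_m(g)(x)|\le C M (1+|x|)^{-\mu}$ with $C$ independent of $x$, which is the claim after multiplying through by $(1+|x|)^\mu$ and taking the supremum. For $|x|$ bounded the estimate is trivial from $k\in L^1$ plus local integrability of the singular part, so one only needs the above for $|x|$ large.

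\bigskip

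I expect the only real bookkeeping obstacle to be organizing the case $\mu\ge N$ in the far region cleanly, since there the naive bound $\int (1+|z|)^{-\mu}\,dz$ no longer improves the power of $|x|$ and one must genuinely rely on the extra decay $2s$ in the kernel tail; this is exactly the point at which the restriction $\mu<N+2s$ is sharp. Everything else is a routine splitting-of-the-convolution argument, and since the bound on $k$ is the same one obtains for the Bessel-type kernel in the local ($s=1$) theory, the computation is standard. No use of the Caffarelli--Silvestre extension is needed here; Lemma 2.2 is purely a statement about the convolution kernel $k$.
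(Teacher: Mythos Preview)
Your argument is correct. Note, however, that the paper does not supply its own proof of this lemma: it is one of four results quoted verbatim from \cite{DDW} (D\'avila--del Pino--Wei), so there is no in-paper proof to compare against. That said, the direct convolution estimate you outline---splitting into the near region $|x-z|\le 1$ (where the local singularity $|y|^{-(N-2s)}$ is integrable and $|z|\approx|x|$) and the far region $|x-z|\ge 1$ (where the tail decay $|y|^{-(N+2s)}$ is used, with a further split according to whether $|z|\le|x|/2$ or $|z|\ge|x|/2$)---is exactly the standard argument for weighted $L^\infty$ bounds of Bessel-type kernels, and is essentially what one finds in \cite{DDW} and related references. Your identification of the case $\mu\ge N$ as the place where the hypothesis $\mu<N+2s$ becomes sharp is accurate, and the bookkeeping you describe handles it correctly.
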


\begin{lemma}\label{2.3}
Assume that $g \in L^2(\R^N)\cap L^\infty(\R^N)$. Then the following holds: if $\varphi=T_m(g)$ then there is a $C>0$ such that
\begin{equation}
\sup\limits_{x \neq y} \frac{\left|\varphi(x) - \varphi(y)\right|}{|x-y|^\beta} \le C \|g\|_{L^\infty(\R^N)}
\end{equation}
where $\beta =\min\{ 1, 2s \}$.
\end{lemma}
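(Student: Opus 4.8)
\textbf{Proof strategy for Lemma \ref{2.3}.}

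The plan is to obtain the H\"older estimate directly from the convolution representation $\varphi = T_m(g) = k * g$ together with the decay properties $(i)$--$(iii)$ of the kernel $k$. Fix $x\neq y$ in $\R^N$ and set $\delta = |x-y|$. I would split the estimate into two regimes according to whether $\delta \ge 1$ or $\delta < 1$; the first is trivial since $|\varphi(x)-\varphi(y)| \le 2\|\varphi\|_{L^\infty} \le C\|g\|_{L^\infty}$ by Lemma \ref{2.2} with $\mu = 0$, and $|x-y|^\beta \ge 1$, so it remains to treat $\delta < 1$.

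For the main case $\delta < 1$, I would write
\[
\varphi(x) - \varphi(y) = \int_{\R^N} \bigl(k(x-z) - k(y-z)\bigr) g(z)\,\mathrm dz,
\]
bound $|g(z)| \le \|g\|_{L^\infty}$, and reduce to estimating $\int_{\R^N} |k(x-z) - k(y-z)|\,\mathrm dz$ by a constant independent of $x,y$ with $|x-y|<1$, multiplied by $\delta^\beta$. I would partition $\R^N$ into the near region $B := \{z : |x-z| \le 2\delta\}$ and its complement. On $B$ (which also controls a comparable ball around $y$), I estimate each term separately using $(i)$: $\int_{|x-z|\le 2\delta} |x-z|^{-(N-2s)}\,\mathrm dz \le C\delta^{2s}$, and likewise for the $y$-term, giving a contribution $\le C\|g\|_{L^\infty}\,\delta^{2s} \le C\|g\|_{L^\infty}\,\delta^{\beta}$ since $\beta \le 2s$ and $\delta<1$. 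On the far region $|x-z| > 2\delta$, the segment joining $x$ and $y$ stays away from $z$, so by the mean value theorem $|k(x-z)-k(y-z)| \le \delta\, \sup_{\zeta} |\nabla k(\zeta - z)|$ where $\zeta$ ranges over that segment and $|\zeta - z| \ge \tfrac12|x-z|$; I then bound $|\nabla k|$ using $(i)$ near the origin and $(iii)$ at infinity, namely $|\nabla k(\eta)| \le C|\eta|^{-(N-2s+1)}$ for $|\eta|\le 1$ and $|\nabla k(\eta)| \le C|\eta|^{-(N+2s+1)}$ for $|\eta|\ge 1$. Splitting the far integral at $|x-z|=1$, the inner annulus $2\delta < |x-z| \le 1$ contributes $\delta \int_{2\delta}^{1} \rho^{-(N-2s+1)} \rho^{N-1}\,\mathrm d\rho \le C\delta \cdot \delta^{2s-1} = C\delta^{2s} \le C\delta^\beta$ when $2s<1$ (and $C\delta|\log\delta| \le C\delta^\beta$ for $\beta<1$ when $2s\ge 1$, or simply $C\delta$ when $2s>1$, in which case $\beta=1$), while the outer region $|x-z|>1$ contributes at most $\delta \int_1^\infty \rho^{-(N+2s+1)}\rho^{N-1}\,\mathrm d\rho = C\delta \le C\delta^\beta$. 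Summing the contributions and dividing by $\delta^\beta$ yields the claimed bound.

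The main technical obstacle is the bookkeeping in the near region and the inner annulus of the far region when $2s \ge 1$: there the naive mean-value bound produces a logarithmic factor $\delta|\log\delta|$, which still satisfies $\delta|\log\delta| \le C\delta^\beta$ only because $\beta = \min\{1,2s\} < 1$ is strictly less than $1$ in the relevant subcase — one must be careful to track which value of $\beta$ is in force and to absorb the logarithm correctly, rather than to claim the sharper exponent $2s$. Everything else is a routine application of the pointwise kernel bounds $(i)$ and $(iii)$ together with Lemma \ref{2.2} to handle the global $L^\infty$ control; no use of the extension problem \eqref{ws} or of Lemma \ref{2.1} is needed for this particular estimate.
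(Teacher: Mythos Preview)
The paper does not give its own proof of Lemma~\ref{2.3}; it is quoted from \cite{DDW} together with Lemmas~\ref{2.1}, \ref{2.2} and~\ref{2.4}. Your kernel-based approach via the convolution $\varphi = k * g$ and the pointwise bounds $(i)$--$(iii)$ is the natural direct route, and for $s \ne \tfrac12$ your argument is essentially correct and complete.

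There is, however, a genuine gap at the borderline $s = \tfrac12$. You write that the inner annulus produces $C\delta|\log\delta|$ and then assert this is $\le C\delta^\beta$ ``because $\beta = \min\{1,2s\} < 1$ is strictly less than $1$ in the relevant subcase''. That assertion is false: when $2s \ge 1$ one has $\beta = \min\{1,2s\} = 1$, and in particular at $s = \tfrac12$ the target exponent is exactly $\beta = 1$. The bound $\delta|\log\delta| \le C\delta$ does \emph{not} hold uniformly as $\delta \to 0$, so your argument does not close in this case. For $2s > 1$ you correctly observe that $\int_{2\delta}^1 \rho^{2s-2}\,\mathrm d\rho$ is bounded independently of $\delta$ (since $2s-2 > -1$) and the contribution is $C\delta = C\delta^\beta$; for $2s < 1$ you correctly get $C\delta^{2s} = C\delta^\beta$. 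It is precisely the endpoint $2s = 1$ that your bookkeeping mishandles.

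To repair this you cannot simply absorb the logarithm; the naive gradient bound on $k$ is too crude there. One standard fix is a second-difference (Zygmund) argument exploiting the symmetry of the kernel to cancel the first-order term, or alternatively one appeals to the regularity theory coming from the Caffarelli--Silvestre extension as in \cite{DDW}. Either way, the case $s = \tfrac12$ requires a separate, more delicate treatment than the one you sketched.
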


\begin{lemma}\label{2.4}
 Let $\varphi \in H^{2s}$ be the solution  of
\begin{equation}\label{mp}
(-\Delta)^s \varphi + W(x) \varphi =g \qquad \mbox{in}\  \R^N
\end{equation}
 with bounded potential $W$. If $\inf_{x \in \R^N} W(x)=: m>0$,  $g \ge 0$. Then $\varphi \ge 0$ in
 $\R^N$.
\end{lemma}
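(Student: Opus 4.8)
The plan is to prove Lemma \ref{2.4} by a fractional Stampacchia-type energy argument: test the equation against the negative part of $\varphi$ and show that this negative part vanishes. Write $\varphi = \varphi^+ - \varphi^-$ with $\varphi^\pm := \max\{\pm\varphi, 0\} \ge 0$. Since $\varphi \in H^{2s}(\R^N) \subset H^s(\R^N)$, the truncations $\varphi^\pm$ belong to $H^s(\R^N)$; and, crucially, the negative part $\tilde\varphi^- := \max\{-\tilde\varphi, 0\}$ of the $s$-harmonic extension $\tilde\varphi$ of $\varphi$ belongs to the Hilbert space $H$: indeed $|\nabla\tilde\varphi^-| \le |\nabla\tilde\varphi|$ a.e., so $\iint_{\R_+^{N+1}}|\nabla\tilde\varphi^-|^2 y^{1-2s} < \infty$, while the Lipschitz truncation $t \mapsto \max\{-t,0\}$ commutes with taking traces, so $\tilde\varphi^-(x,0) = \varphi^-(x)$ and $\int_{\R^N}|\tilde\varphi^-(\cdot,0)|^2 \le \int_{\R^N}\varphi^2 < \infty$. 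The goal is to show $\int_{\R^N}(\varphi^-)^2 = 0$.

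First I would recast (\ref{mp}) in the Caffarelli--Silvestre weak form. With $m := \inf_{x \in \R^N} W(x) > 0$ as in the statement, set $\tilde g := g - (W - m)\varphi$; since $W$ is bounded and $\varphi \in L^2(\R^N)$ we have $\tilde g \in L^2(\R^N)$, and $\varphi = T_m(\tilde g)$, so by Lemma \ref{2.1} the extension $\tilde\varphi$ solves (\ref{ws}) with $g$ replaced by $\tilde g$. Moving the term $(W-m)\varphi\,\phi(\cdot,0)$ back to the left-hand side gives the variable-coefficient weak identity
\begin{equation*}
\iint_{\R_+^{N+1}} \nabla\tilde\varphi \cdot \nabla\phi \; y^{1-2s}\,\mathrm dx\,\mathrm dy + \int_{\R^N} W(x)\,\varphi(x)\,\phi(x,0)\,\mathrm dx = \int_{\R^N} g(x)\,\phi(x,0)\,\mathrm dx, \qquad \forall\, \phi \in H.
\end{equation*}

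Next I would insert $\phi = \tilde\varphi^- \in H$ and estimate each term. On $\{\tilde\varphi \ge 0\}$ one has $\nabla\tilde\varphi^- = 0$ a.e., whereas on $\{\tilde\varphi < 0\}$ one has $\tilde\varphi^- = -\tilde\varphi$, hence $\nabla\tilde\varphi \cdot \nabla\tilde\varphi^- = -|\nabla\tilde\varphi^-|^2$; thus the bulk integral equals $-\iint_{\R_+^{N+1}}|\nabla\tilde\varphi^-|^2 y^{1-2s} \le 0$. For the potential term, $\varphi\,\varphi^- = -(\varphi^-)^2$ pointwise, so $\int_{\R^N} W\varphi\varphi^- = -\int_{\R^N} W(\varphi^-)^2 \le -m\int_{\R^N}(\varphi^-)^2$. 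Finally, since $g \ge 0$ and $\varphi^- \ge 0$, the right-hand side $\int_{\R^N} g\,\varphi^-$ is nonnegative. Combining,
\[
0 \le \int_{\R^N} g\,\varphi^- = \iint_{\R_+^{N+1}} \nabla\tilde\varphi\cdot\nabla\tilde\varphi^- \, y^{1-2s} + \int_{\R^N} W\varphi\varphi^- \le -m\int_{\R^N}(\varphi^-)^2 \le 0 ,
\]
which forces $\int_{\R^N}(\varphi^-)^2 = 0$. Hence $\varphi^- = 0$, i.e. $\varphi \ge 0$ in $\R^N$.

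The argument is essentially routine; the two points deserving a little care are (i) checking that $\tilde\varphi^-$ is an admissible test function in $H$ with trace $\varphi^-$ — which rests on the chain rule for Lipschitz truncations in the $A_2$-weighted Sobolev space underlying $H$ — and (ii) the bookkeeping that upgrades the constant-coefficient content of Lemma \ref{2.1} to the variable-coefficient weak identity displayed above. If one prefers to bypass the extension, the same conclusion follows directly from the singular-integral representation $(-\Delta)^s\varphi(x) = c_{N,s}\,\mathrm{P.V.}\int_{\R^N}\frac{\varphi(x)-\varphi(y)}{|x-y|^{N+2s}}\,\mathrm dy$ together with the pointwise inequality $(\varphi(x)-\varphi(y))(\varphi^-(x)-\varphi^-(y)) \le -(\varphi^-(x)-\varphi^-(y))^2$, a consequence of $\varphi^+\varphi^- \equiv 0$, which yields $\int_{\R^N}(-\Delta)^s\varphi\cdot\varphi^- \le 0$ and then the same chain of inequalities.
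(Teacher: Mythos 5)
Your argument is correct. Note that the paper does not actually prove Lemma \ref{2.4}; it is quoted from \cite{DDW} together with Lemmas \ref{2.1}--\ref{2.3}, so there is no in-text proof to compare against. Your Stampacchia-type argument --- reducing to the constant-coefficient operator via $\varphi=T_m\bigl(g-(W-m)\varphi\bigr)$, passing to the Caffarelli--Silvestre weak formulation through Lemma \ref{2.1}, and testing with $\tilde\varphi^-$ --- is the standard and expected proof, and you correctly flag the only two delicate points (that $\tilde\varphi^-\in H$ with trace $\varphi^-$, and the sign computations $\nabla\tilde\varphi\cdot\nabla\tilde\varphi^-=-|\nabla\tilde\varphi^-|^2\chi_{\{\tilde\varphi<0\}}$ and $\varphi\varphi^-=-(\varphi^-)^2$). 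The alternative route via the singular-integral representation and the pointwise inequality $(\varphi(x)-\varphi(y))(\varphi^-(x)-\varphi^-(y))\le-(\varphi^-(x)-\varphi^-(y))^2$ is likewise valid and self-contained.
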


Using these lemmas, we obtain an a-priori estimate for any solution $\varphi \in L^2(\R^N)\cap L^\infty(\R^N)$ of (\ref{mp}).
\begin{lemma}\label{apriori}
Let $W$ be a continuous function, and assume that for $k$ points $q_1, \ldots, q_k$, there is an $R>0$ and $B=\cup_{j=1}^k B_R(q_j)$ such that
\[
\inf\limits_{x \in \R^N \backslash B} W(x)=:m >0.
\]
Then given any number $\frac{N}{2} < \mu < N+2s$, there exists a uniform positive constant $C=C(\mu, R)$ independent of $k$ such that
for any $\varphi \in H^{2s}(\R^N)\cap L^\infty(\R^N)$ and $g$ satisfying (\ref{mp})
with
\[
\|\rho^{-1} g\|_{L^\infty(\R^N)} < +\infty, \qquad \text{where }\rho(x)=\sum\limits_{j=1}^k \frac1{(1 + |x-q_j|)^\mu},
\]
we have the validity of the estimate
\[
\|\rho^{-1} \varphi\|_{L^\infty(\R^N)} \le C \left(\|\varphi\|_{L^\infty(B)} + \|\rho^{-1} g\|_{L^\infty(\R^N)}\right).
\]

\end{lemma}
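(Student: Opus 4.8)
The plan is to establish the estimate by a contradiction-and-rescaling argument combined with the comparison principle (Lemma \ref{2.4}) and the decay properties of $T_m$ (Lemma \ref{2.2}). First I would normalize: suppose the conclusion fails. Then there exist sequences $k_n\to\infty$ (or at least a sequence of configurations), functions $\varphi_n$, $g_n$ and potentials $W_n$ satisfying all the hypotheses with the same $R,\mu$, such that
\[
\|\rho_n^{-1}\varphi_n\|_{L^\infty(\R^N)}=1,\qquad \|\varphi_n\|_{L^\infty(B_n)}+\|\rho_n^{-1}g_n\|_{L^\infty(\R^N)}\to 0 .
\]
The idea is that the weight $\rho_n$ is subsolution-like for the operator outside $B_n$: using property $(ii)$ of the kernel $k$ and Lemma \ref{2.2} with exponent $\mu<N+2s$, one checks that $(-\Delta)^s \rho_n + m\rho_n \ge c\,\rho_n$ on $\R^N\setminus B_n$ for a constant $c>0$ depending only on $\mu$ (each bump $(1+|x-q_j|)^{-\mu}$ contributes, and the fractional Laplacian of a single such bump is $\ge -C(1+|x-q_j|)^{-\mu-?}$ plus a positive zeroth-order part; the key point is that $T_m$ of a function comparable to $\rho_n$ is again comparable to $\rho_n$, so $\rho_n$ controls itself up to the constant $C$ of Lemma \ref{2.2}).

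Next I would build a barrier. Set $\psi_n = M\varphi_n$-competitor of the form $\delta_n^{1/2}\rho_n + $ (correction supported in $B_n$), where $\delta_n:=\|\varphi_n\|_{L^\infty(B_n)}+\|\rho_n^{-1}g_n\|_{L^\infty}\to0$. On $\R^N\setminus B_n$ we have $(-\Delta)^s(\text{barrier}) + W_n(\text{barrier}) \ge c\,(\text{barrier})$ while $(-\Delta)^s\varphi_n + W_n\varphi_n = g_n$ with $|g_n|\le \delta_n\rho_n$; choosing the multiplicative constant in the barrier of order $\delta_n/c$ and adding a term that dominates $\varphi_n$ on $\partial B_n$ (here we use $\|\varphi_n\|_{L^\infty(B_n)}\le\delta_n$ together with the Hölder bound of Lemma \ref{2.3} to control $\varphi_n$ near $B_n$), Lemma \ref{2.4} applied to the difference $(\text{barrier})-\varphi_n$ on the exterior domain $\R^N\setminus B_n$ — after checking the sign of the right-hand side and of the potential there — gives $\varphi_n\le C\delta_n\rho_n$ outside $B_n$; the same for $-\varphi_n$. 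Combined with $|\varphi_n|\le\delta_n$ on $B_n$ and the fact that $\rho_n\ge c(R)$ on $B_n$, this yields $\|\rho_n^{-1}\varphi_n\|_{L^\infty}\le C\delta_n\to 0$, contradicting the normalization $\|\rho_n^{-1}\varphi_n\|_{L^\infty}=1$.

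The main obstacle I anticipate is making the comparison-principle step uniform in $k$: the number of bumps in $\rho_n$ grows, so I must be careful that the constant $c$ in the inequality $(-\Delta)^s\rho_n + m\rho_n \ge c\,\rho_n$ on $\R^N\setminus B_n$ does not degenerate as $k\to\infty$. This is exactly where Lemma \ref{2.2} is essential, since its constant is independent of $k$ and of the location of the points; one estimates $T_m(\rho_n)$ globally rather than bump-by-bump, so the overlap of the tails of distinct bumps is handled automatically by the $L^\infty$-with-weight bound. A secondary technical point is the behaviour on the fixed-radius balls $B_R(q_j)$ and on the transition region just outside them, where $W_n$ need only satisfy $\inf_{\R^N\setminus B}W_n=m$ and could be large or small inside $B$; there the Hölder estimate of Lemma \ref{2.3} is used to transfer the interior $L^\infty$ bound on $\varphi_n$ to a boundary bound suitable for the exterior comparison argument. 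Once these uniform-in-$k$ issues are settled, the rest is the routine barrier bookkeeping sketched above.
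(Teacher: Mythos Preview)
Your proposal has a real gap in the comparison step. You want to apply Lemma~\ref{2.4} on the exterior domain $\R^N\setminus B_n$, using the smallness of $\varphi_n$ on $\partial B_n$ as a boundary condition. But Lemma~\ref{2.4} is a \emph{global} maximum principle on $\R^N$; for the nonlocal operator $(-\Delta)^s$, an exterior comparison principle requires control of the function on all of $B_n$ (as exterior data), not just on $\partial B_n$, and Lemma~\ref{2.3} does not give you that in the form you need. The phrase ``adding a term that dominates $\varphi_n$ on $\partial B_n$'' is thinking locally, and that is precisely where nonlocal arguments break.

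The paper sidesteps this entirely by a simple device that you are missing: it rewrites the equation on all of $\R^N$ as
\[
(-\Delta)^s\varphi + \widetilde W\varphi = \tilde g,\qquad \widetilde W = m\chi_B + W(1-\chi_B),\quad \tilde g = (m-W)\chi_B\,\varphi + g,
\]
so that $\widetilde W\ge m$ everywhere and the interior information $\|\varphi\|_{L^\infty(B)}$ is absorbed into the source term, with $|\tilde g|\le M\rho$ for $M\le C(\mu,R)\bigl(\|\varphi\|_{L^\infty(B)}+\|\rho^{-1}g\|_{L^\infty}\bigr)$. Then no contradiction is needed: one takes $\varphi_0:=T_m\bigl((1+|x|)^{-\mu}\bigr)$, which by Lemma~\ref{2.2} satisfies $\varphi_0\le C(1+|x|)^{-\mu}$, sets $\bar\varphi=M\sum_j\varphi_0(\cdot-q_j)$, and applies Lemma~\ref{2.4} on the whole space to $\bar\varphi\pm\varphi$. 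The uniformity in $k$ that worries you is then automatic, since $\sum_j\varphi_0(\cdot-q_j)\le C\rho$ with $C$ independent of $k$. Your contradiction framework, the Hölder bound, and the exterior-domain analysis are all unnecessary once you make this modification of the potential.
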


\begin{proof}
We rewrite (\ref{mp}) as
\[
(-\Delta)^s \varphi + \widetilde{W} \varphi =\tilde{g},
\]
where $\tilde{g}=(m-W)\chi_B \varphi + g$, $\widetilde{W}=m\chi_B + W(1-\chi_B)$ and $\chi_B$ is the characteristic function on $B$.
By careful calculation, it is deduced that
\[
|\tilde{g}(x)|  \le C\|\varphi\|_{L^\infty(B)} + \|\rho^{-1}g\|_{L^\infty(\R^N)} \rho\le  M\rho
\]
where
\[
\begin{split}
M=&\  C\|\varphi\|_{L^\infty(B)}\sup\limits_{x \in B}\left( \sum\limits_{j=1}^k \frac1{(1 + |x-q_j|)^\mu}  \right)^{-1} + \|\rho^{-1}g\|_{L^\infty(\R^N)}\\
\le &\ C \|\varphi\|_{L^\infty(B)}\max_{1\le j\le k} \sup\limits_{x \in B_R(q_j)}\left(  \frac1{(1 + |x-q_j|)^\mu}  \right)^{-1}+ \|\rho^{-1}g\|_{L^\infty(\R^N)}\\
\le &\ C \|\varphi\|_{L^\infty(B)}(1 + R^\mu)+ \|\rho^{-1}g\|_{L^\infty(\R^N)}\\
\le &\ C(\mu, R) \left(\|\varphi\|_{L^\infty(B)}+ \|\rho^{-1}g\|_{L^\infty(\R^N)} \right).
\end{split}
\]

From Lemma \ref{2.2} with $0 < \mu < N+2s$, the positive solution $\varphi_0$ to the problem
\[
(-\Delta)^s \varphi_0 + m \varphi_0 = \frac1{(1 + |x|)^\mu}
\]
satisfies $\varphi_0 = O(|x|^{-\mu})$ as $|x| \rightarrow +\infty$.
Since $\inf\limits_{x \in \R^N} \widetilde{W}(x) \geq m$ obviously, we have
\[
\left((-\Delta)^s + \widetilde{W}   \right) \bar{\varphi} \ge M \sum\limits_{j=1}^k \frac1{(1 + |x-q_j|)^\mu}
\]
where $\bar{\varphi}(x) =M \sum\limits_{j=1}^k \varphi_0(x -q_j)$.
Setting $\psi=\varphi - \bar{\varphi}$, one find that
\[
(-\Delta)^s \psi + \widetilde{W} \psi =\bar{g} \le 0.
\]
 Using Lemma \ref{2.4} we get that $\varphi \le \bar{\varphi}$. Arguing similarly for $-\varphi$,  we get that $|\varphi| \le \bar{\varphi}$.
Then it holds that
\[
\begin{split}
\|\rho^{-1}\varphi\|_{L^\infty(\R^N)} & \le \|\rho^{-1}\bar{\varphi}\|_{L^\infty(\R^N)}=M\left\|\left( \sum\limits_{j=1}^k \frac1{(1 + |x-q_j|)^\mu} \right )^{-1} \sum\limits_{i=1}^k \varphi_0(x
-q_i)\right\|_{L^\infty(\R^N)}\\
& \leq C M \left\|\left(\sum\limits_{j=1}^k \frac1{(1 + |x-q_j|)^\mu} \right )^{-1} \sum\limits_{i=1}^k \frac1{(1 + |x-q_i|)^\mu}\right\|_{L^\infty(\R^N)}\\
& \leq C M.
\end{split}
\]
 The desired estimate follows right now.
\end{proof}

Examining the above proof, we can deduce the following immediately.
\begin{corollary}
Let $\rho(x)$ be defined as in the previous lemma. Assume that $\varphi \in H^{2s}(\R^N)$ satisfies the problem (\ref{mp}) and that
\[
\inf\limits_{x \in \R^N} W(x)=:m >0.
\]
Then we have that $\varphi \in L^\infty(\R^N)$ and it satisfies
\begin{equation}
\|\rho^{-1} \varphi\|_{L^\infty(\R^N)} \le C  \|\rho^{-1} g\|_{L^\infty(\R^N)}
\end{equation}
where $C=C(\mu)$ independent of $k$.
\end{corollary}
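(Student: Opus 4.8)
The plan is to reduce the general case $\varphi\in H^{2s}(\R^N)$ (without the a priori bound $\|\varphi\|_{L^\infty(B)}<\infty$ on a bounded set) to the situation already handled in Lemma~\ref{apriori}, by first upgrading $\varphi$ to an $L^\infty$ function and then absorbing the local contribution. Since $m=\inf_{x}W(x)>0$ here — with $B=\emptyset$, so that $\widetilde W=W$ and $\tilde g=g$ in the notation of the lemma's proof — the inhomogeneity already satisfies the clean bound $|g(x)|\le\|\rho^{-1}g\|_{L^\infty}\,\rho(x)$ with no extra term coming from a ball.

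First I would show $\varphi\in L^\infty(\R^N)$. Writing the equation as $(-\Delta)^s\varphi + m\varphi = g + (m-W)\varphi$ with right-hand side in $L^2(\R^N)$, Lemma~\ref{2.1} identifies $\varphi$ with $T_m\big(g+(m-W)\varphi\big)$; but one must start from $g\in L^2$ only, so I would instead argue by a bootstrap or a direct barrier argument. The cleanest route: since $W$ is bounded below by $m>0$ and $g/\rho\in L^\infty$, the function $\rho$ itself is bounded, so $g\in L^2\cap L^\infty$; then writing $(-\Delta)^s\varphi+\widetilde W\varphi = g$ with $\widetilde W=W\ge m$ and applying the barrier $\bar\varphi(x)=M\sum_j\varphi_0(x-q_j)$ with $M=\|\rho^{-1}g\|_{L^\infty}$ exactly as in the proof of Lemma~\ref{apriori} (the term $\|\varphi\|_{L^\infty(B)}$ now being absent), Lemma~\ref{2.4} applied to $\pm(\varphi-\bar\varphi)$ gives $|\varphi|\le\bar\varphi$. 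Here one uses that $\varphi\in H^{2s}\hookrightarrow L^2$ decays in an averaged sense, so $\psi=\varphi-\bar\varphi$ is an admissible function to which Lemma~\ref{2.4} (weak maximum principle) applies — this is the only point requiring a small amount of care, since $\varphi$ is not a priori known to vanish at infinity pointwise.

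Once $|\varphi|\le\bar\varphi$ is established, the final estimate follows verbatim from the last display chain in the proof of Lemma~\ref{apriori}: using $\varphi_0(x-q_i)=O\big((1+|x-q_i|)^{-\mu}\big)$ from Lemma~\ref{2.2} (valid for any $0<\mu<N+2s$, in particular the $\tfrac N2<\mu<N+2s$ of the statement), one bounds
\[
\|\rho^{-1}\varphi\|_{L^\infty}\le\|\rho^{-1}\bar\varphi\|_{L^\infty}
= M\Big\|\rho^{-1}\textstyle\sum_i\varphi_0(\cdot-q_i)\Big\|_{L^\infty}
\le CM = C\|\rho^{-1}g\|_{L^\infty},
\]
with $C=C(\mu)$ independent of $k$ because the ratio $\sum_i(1+|x-q_i|)^{-\mu}\big/\sum_j(1+|x-q_j|)^{-\mu}$ is identically bounded. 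The main obstacle, as noted, is the preliminary step that $\varphi$ is genuinely bounded and amenable to the comparison principle; everything after that is a direct specialization of Lemma~\ref{apriori} with the boundary term switched off.
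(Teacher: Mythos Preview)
Your proposal is correct and follows essentially the same route as the paper, which simply states that the corollary follows by ``examining the above proof'' of Lemma~\ref{apriori}: taking $B=\emptyset$ so that $\widetilde W=W$, $\tilde g=g$, and $M=\|\rho^{-1}g\|_{L^\infty}$, the barrier $\bar\varphi$ and Lemma~\ref{2.4} give $|\varphi|\le\bar\varphi$ and hence the estimate. Your treatment is in fact more careful than the paper's, since you explicitly address the passage from $\varphi\in H^{2s}$ to $\varphi\in L^\infty$ (noting that $\bar\varphi\in H^{2s}$ so that Lemma~\ref{2.4} applies to $\psi=\varphi-\bar\varphi$), a point the paper leaves implicit.
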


\begin{remark}
We build these results for any $\frac{N}2 < \mu < N+2s$, but for our purpose, from now on we choose
$$\bm{\mu = \frac{N}{2}-\F{m}{N+2s}+1+\sigma} \in (\frac{N}2,  N+2s).$$  Here $\sigma>0$ is small enough.
\end{remark}

Due to the symmetry, we define $\Omega_j$ as follows
\[
\Omega_j=\left\{ y =(y',y'') \in \R^2 \times \R^{N-2}: \quad \left\langle \frac{y'}{|y'|}, \frac{q_j}{|q_j|}\right\rangle \ge \cos\frac{\pi}{k}\right\},
\]
and introduce the following estimate for later use. For any $\beta\geq \F{N+2s-m}{N+2s}$ and fixed $\ell$, as $k\to\infty$, it holds that
\begin{equation*}
\sum_{i\ne \ell}\frac1{|q_i-q_\ell|^\beta}=\F{1}{2^\beta}\sum_{i\neq \ell} \F{1}{r^\beta \sin^\beta\F{|i-\ell|\pi}{k}}
\le \frac{C k^\beta}{r^\beta}\sum_{i=1}^k \frac1{i^\beta}
\leq
\begin{cases}
\F{Ck^\beta}{r^\beta}=O(r^{-\F{m\beta}{N+2s}}) \qquad & \beta>1, \smallskip \\
\F{Ck^\beta\ln k}{r^\beta}=O(r^{-\F{m\beta}{N+2s}}\ln r) \quad & \beta=1, \smallskip \\
\F{C k}{r^\beta}=O(r^{-(\beta-\F{N+2s-m}{N+2s})}) \quad & \beta<1.
\end{cases}
\end{equation*}

\begin{remark}\label{r1}
It holds that
\[
\rho(x) \le C + C\sum\limits_{j=2}^k \frac1{|q_1-q_j|^{\frac{N}{2}-\F{m}{N+2s}+1+\sigma}} \leq C + C\left(\frac{k}{r}\right)^{\frac{N}{2}-\F{m}{N+2s}+1+\sigma} \le C.
\]
according to Lemma \ref{laa0}. Also we easily have
\begin{equation*}
\int_{\Omega_1} \rho^2 \leq \int_{\Omega_1} \left( \frac1{(1 + |x-q_1|)^{\frac{N}{2}-\F{m}{N+2s}+1+\sigma}} + \frac1{(1 + |x-q_1|)^{\frac{N}{2}+\sigma}}\sum_{j=2}^k
\frac1{|q_1-q_j|^{1-\F{m}{N+2s}}} \right)^2 \mathrm dx \leq C.
\end{equation*}
\end{remark}
In what follows, we use $\|f\|_*$ to mean $\|\rho^{-1} f\|_{L^\infty(\R^N)}$ for convenience, i.e.
\begin{equation*}
\|f\|_* = \|\rho^{-1} f\|_{L^\infty(\R^N)} = \sup_{x\in\R^N} \left(\sum_{j=1}^k \frac1{(1 + |x-q_j|)^{\frac{N}{2}-\F{m}{N+2s}+1+\sigma}}\right)^{-1}f(x) .
\end{equation*}
A useful fact is that if $f$, $g \in L^2(\R^N)$ and $F=T_m(f), G=T_m(g)$, then the following holds
\[
\int_{\R^N} G (-\Delta)^s F - \int_{\R^N} F (-\Delta)^s G = -\int_{\R^N} T_m(f) g + \int_{\R^N} f T_m(g) =0
\]
since the kernel $k$ is radially symmetric.

\section{Ansatz}\label{s3}
\setcounter{equation}{0}

In this section, we set up the approximation solution and estimate the corresponding error term.

By a solution of the problem
\[
(-\Delta)^s u + V u - u^p=0 \qquad \mbox{in} \  \R^N,
\]
we mean a $u \in H^{2s}(\R^N) \cap L^\infty(\R^N)$ such that the above equation is satisfied. Let us observe that it suffices to solve
\begin{equation}\label{rp}
(-\Delta)^s u + V u - u_+^p=0 \qquad \mbox{in} \ \ \R^N
\end{equation}
where $u_+ =\max\{u , 0\}$ with the help of Lemma \ref{2.4}.

We look for a solution $u$ of the form
\[
u=W +\varphi, \quad W =\sum\limits_{j=1}^k W_j, \quad W_j =w(x-q_j)
\]
where $\varphi \in H_s$ is a small function, disappearing as $k \rightarrow +\infty$.
In terms of $\varphi$, the equation (\ref{rp}) becomes
\begin{equation}\label{varphi}
(-\Delta)^s \varphi(x) + V(|x|)\varphi(x)-pW^{p-1}\varphi(x) = E + N(\varphi) \qquad \mbox{in} \ \ \R^N,
\end{equation}
where
\begin{gather*}
N(\varphi) = (W + \varphi)^p_+ - W^p- pW^{p-1}\varphi,\\
E = \sum\limits_{j=1}^k \left( 1 - V(|x|) \right)W_j + \left(\sum\limits_{j=1}^k W_j\right)^p - \sum\limits_{j=1}^k  W_j^p.
\end{gather*}

Rather than solving the problem (\ref{varphi}) directly, we shall first solve a projected version of it, precisely,
\begin{equation}\label{3.2}
\begin{cases}
(-\Delta)^s \varphi(x) + V(|x|)\varphi(x)-pW^{p-1}\varphi(x) = E + N(\varphi) + c\sum\limits_{j=1}^k Z_j \qquad \mbox{in} \ \ \R^N,\\
\ \varphi \in H_s, \smallskip\\
\displaystyle \int_{\R^N} Z_j \varphi =0, \qquad j=1, \ldots, k,
\end{cases}
\end{equation}
for some pair $(\varphi, c)$ where $ \varphi \in H^{2s}(\R^N)\cap L^\infty(\R^N), c$ is a constant,
\[
Z_j = \frac{\partial W_j}{\partial r} \qquad \text{for } j=1, \ldots, k, \qquad \text{and} \qquad |Z_j| \le \frac{C}{(1 + |x-q_j|)^{N+2s}} \text{ obviously}.
\]
After the problem (\ref{3.2}) solved, a variational process will carry out to find a suitable $r$ and then make the constant $c$ in (\ref{3.2}) be zero,
i.e. we solve the problem (\ref{varphi}).

At the end of this section, we give the estimate of $E$.
\begin{lemma}\label{ee}
It holds that
\begin{equation}
\|E\|_* \le C\left( \frac{k}{r} \right)^{\min\left\{N+2s,
(N+2s)p-\mu\right\} }+  \frac{C}{r^{N+2s-\mu}} + \frac{C}{r^m} =
o\left(\F{1}{r^{m/2}}\right).
\end{equation}
\end{lemma}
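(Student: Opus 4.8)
The plan is to split $E$ into its two natural pieces and estimate each in the weighted norm $\|\cdot\|_* = \|\rho^{-1}\cdot\|_{L^\infty}$. Write $E = E_1 + E_2$ where $E_1 = \sum_{j=1}^k (1-V(|x|))W_j$ is the potential error coming from the discrepancy $V - V_0$, and $E_2 = \left(\sum_{j=1}^k W_j\right)^p - \sum_{j=1}^k W_j^p$ is the interaction error coming from the overlap of the bumps. Because of the symmetry incorporated in $H_s$, it suffices to bound $\rho^{-1}E$ pointwise on the region $\Omega_1$ (the sector centered at $q_1$), and there we may further split according to whether $|x-q_1|$ is small (say $\le r/2$) or large.

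For $E_1$: using the expansion of $V$, $|1 - V(|x|)| \le C|x|^{-m}$ for $|x|$ large, and the decay $w(x) \sim A|x|^{-(N+2s)}$ from \eqref{expw}, one gets $|W_j(x)| \le C(1+|x-q_j|)^{-(N+2s)}$. On $\Omega_1$ the dominant bump is $W_1$; near $q_1$ one has $|x|\sim r$ so $|1-V(|x|)|W_1 \lesssim r^{-m}(1+|x-q_1|)^{-(N+2s)} \lesssim r^{-m}\rho(x)$, giving the $r^{-m}$ term. The contributions of the far bumps $W_j$, $j\ge 2$, on $\Omega_1$ are controlled by the summation estimate displayed just before Remark \ref{r1} (with exponent $\beta$ chosen appropriately), producing the $r^{-(N+2s-\mu)}$-type term after comparing $(1+|x-q_1|)^{-(N+2s)}$ against the weight $(1+|x-q_1|)^{-\mu}$ in $\rho$; here $\mu < N+2s$ is exactly what makes this ratio bounded by a negative power of $r$.

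For $E_2$: on $\Omega_1$, split as $\left(W_1 + \sum_{j\ge2}W_j\right)^p - W_1^p - \sum_{j\ge 2}W_j^p$. Where $W_1$ dominates, use the elementary inequality $|(a+b)^p - a^p - pb\,a^{p-1}|\le C(b^p + b a^{p-1})$ (or just $|(a+b)^p-a^p|\le C(a^{p-1}b + b^p)$) with $a=W_1$, $b=\sum_{j\ge2}W_j$, and estimate $\sum_{j\ge2}W_j(x)$ on $\Omega_1$ via the decay of $w$ and the lattice sum; the worst term is $W_1^{p-1}\sum_{j\ge2}W_j$. Comparing $W_1^{p-1}(1+|x-q_1|)^{-(N+2s)}$-type quantities to $\rho$ and using $\sum_{j\ge 2}|q_1-q_j|^{-(N+2s)} \le C(k/r)^{N+2s}$ yields the $(k/r)^{\min\{N+2s,\,(N+2s)p-\mu\}}$ term, the two cases in the minimum arising from whether $(N+2s)p \ge N+2s+\mu$ or not (i.e. whether the "cross" term $W_1^{p-1}W_j$ decays faster or slower than the weight). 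Finally, substitute the admissible range $r \in [C_0^{-1}k^{(N+2s)/(N+2s-m)}, C_0 k^{(N+2s)/(N+2s-m)}]$, which gives $k/r \sim r^{-(N+2s-m)/(N+2s)}$, and check against the hypothesis \eqref{1.8} on $m$ that each of the three exponents exceeds $m/2$; this is precisely the arithmetic that the lower bound on $m$ in \eqref{1.8} is engineered to guarantee, so the whole thing is $o(r^{-m/2})$.

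The main obstacle is the bookkeeping in the region where no single bump dominates (comparable distances to two or more centers), and, more seriously, tracking the exponents carefully enough through the comparisons with $\rho$ so that the final estimate matches the claimed $\min\{N+2s, (N+2s)p - \mu\}$ and that the verification $o(r^{-m/2})$ genuinely uses \eqref{1.8}. I would handle the overlap region by the triangle-inequality bound $1+|x-q_1| \le (1+|x-q_j|)(1+|q_1-q_j|)$ to always reduce to a single reference center plus a lattice sum, exactly as in Lemma \ref{laa0} and the estimate preceding Remark \ref{r1}. The nonlinear term $E_2$ with $p$ possibly less than $2$ requires the sublinear inequality rather than a Taylor expansion, which is a minor but necessary care point since the approximate solution has only polynomial decay.
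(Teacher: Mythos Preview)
Your proposal is correct and follows essentially the same route as the paper's proof: split $E=E_1+E_2$, reduce by symmetry to $\Omega_1$, treat $E_1$ via a near/far dichotomy to produce the $r^{-m}$ and $r^{-(N+2s-\mu)}$ contributions, and bound $E_2$ through the cross term $W_1^{p-1}\sum_{j\ge2}W_j$ with the case split $\mu\lessgtr(N+2s)(p-1)$ yielding the exponent $\min\{N+2s,(N+2s)p-\mu\}$. One small clarification: in the paper the dichotomy for $E_1$ is on $|x|\ge r/2$ versus $|x|<r/2$ (not $|x-q_1|$), and the $r^{-(N+2s-\mu)}$ term comes from the region $|x|<r/2$, where $|1-V|$ is only $O(1)$ but every $|x-q_j|\ge r/2$, rather than specifically from the ``far bumps'' as your wording suggests.
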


\begin{proof}
By symmetry, we just assume that $x \in \Omega_1$ in the following proof.
Obviously we know
\[
|x-q_j|\ge |x-q_1| \qquad\text{for } j=2, \ldots, k.
\]
If $|x| \ge |q_1|/2 = r/2$, then
\[
V(|x|) - 1= O\left( \frac1{|x|^m} \right)=O\left( \frac1{r^m} \right)
\]
and in this region
\[
\begin{split}
&\ \left|\rho^{-1}\sum\limits_{j=1}^k \left( 1 - V(|x|) \right)W_j\right| \le \frac{C}{r^m}\left(\sum\limits_{i=1}^k \frac{1}{(1 + |x-q_i|)^{\mu}}\right)^{-1}\sum\limits_{j=1}^k W_j\\
\le &\ \frac{C}{r^m}\left(\sum\limits_{i=1}^k \frac{1}{(1 + |x-q_i|)^{\mu}}\right)^{-1}\sum\limits_{j=1}^k \frac{1}{(1 + |x-q_j|)^{N+2s}}\\
\le &\ \frac{C}{r^m} \left(\sum_{i=1}^k \frac{1}{(1 + |x-q_i|)^{\mu}}\right)^{-1}\sum_{j=1}^k \frac{1}{(1 + |x-q_j|)^{\mu}} \\
\le &\ \frac{C}{r^m}.
\end{split}
\]
While for $|x| \leq r/2$, then
\[
|x -q_1| \ge |q_1| - |x| \ge  \frac{r}2 \quad\text{and}\quad |x-q_j|\ge \frac{r}2 \qquad\text{for}\ j=2, \ldots, k.
\]
Hence
\[
\begin{split}
&\ \left|\rho^{-1}\sum\limits_{j=1}^k \left( 1 - V(|x|) \right)W_j\right| \le C\rho^{-1}\sum\limits_{j=1}^k W_j\le  C\rho^{-1}\sum\limits_{j=1}^k \frac{1}{(1 + |x-q_j|)^{N+2s}}\\
\le &\ C\rho^{-1}\sum\limits_{j=1}^k \frac{1}{(1 + |x-q_j|)^{\mu}} \cdot \frac{1}{(1 + |x-q_j|)^{N+2s-\mu}}\\
\le &\ C\left(\sum\limits_{i=1}^k \frac{1}{(1 + |x-q_i|)^{\mu}}\right)^{-1}\sum\limits_{j=1}^k \frac{1}{(1 + |x-q_j|)^{\mu}} \cdot \frac{1}{r^{N+2s-\mu}}\\
\le &\ \frac{C}{r^{N+2s-\mu}}.
\end{split}
\]

For the other part in $E$,  we  observe that
\[
\left|  \left(\sum\limits_{j=1}^k W_j\right)^p - \sum\limits_{j=1}^k  W_j^p \right|  \le CW_1^{p-1}\sum\limits_{j=2}^k W_j +C\sum\limits_{j=2}^k W_j^p + C\left(\sum\limits_{j=2}^k W_j\right)^p.
\]
In the case of  $\mu \le (N+2s)(p-1)$,
\[
\begin{split}
&\ \rho^{-1}W_1^{p-1}\sum\limits_{j=2}^k W_j  \le  C\rho^{-1}\frac1{(1 + |x-q_1|)^{(N+2s)(p-1)}}\sum\limits_{j=2}^k  \frac1{(1 + |x-q_j|)^{N+2s}} \\
\le &\ C (1 + |x-q_1|)^{\mu}\frac1{(1 + |x-q_1|)^{(N+2s)(p-1)}}\sum\limits_{j=2}^k  \frac1{(1 + |x-q_j|)^{N+2s}} \\
\le &\ C \sum\limits_{j=2}^k  \frac1{|q_j-q_1|^{N+2s}} \le C \left(\frac{k}r\right)^{N+2s};
\end{split}
\]
Otherwise if $\mu > (N+2s)(p-1)$, then
\[
\begin{split}
&\ \rho^{-1}W_1^{p-1}\sum\limits_{j=2}^k W_j  \le  C\rho^{-1}\frac1{(1 + |x-q_1|)^{(N+2s)(p-1)}}\sum\limits_{j=2}^k  \frac1{(1 + |x-q_j|)^{N+2s}} \\
\le &\ C \rho^{-1}\frac1{(1 + |x-q_1|)^{\mu}}\sum\limits_{j=2}^k  \frac1{(1 + |x-q_j|)^{N+2s -\mu+(N+2s)(p-1)}} \\
\le &\ C \sum\limits_{j=2}^k  \frac1{|q_j-q_1|^{(N+2s)p-\mu}} \le C \left(\frac{k}r\right)^{(N+2s)p-\mu},
\end{split}
\]
where we used Lemma \ref{laa0}.
It is easy to deduce that
\begin{equation*}
\begin{split}
\rho^{-1}\sum\limits_{j=2}^k W_j^p &\le C \rho^{-1}\sum\limits_{j=2}^k \frac1{(1 + |x-q_j|)^{(N+2s)p-\mu}}\frac{1}{(1 + |x-q_1|)^{\mu}}\\
 &\le C \sum\limits_{j=2}^k  \frac1{|q_j-q_1|^{(N+2s)p-\mu}} \le C \left(\frac{k}r\right)^{(N+2s)p-\mu}
\end{split}
\end{equation*}
and
\begin{equation*}
\begin{split}
\rho^{-1}\left(\sum\limits_{j=2}^k W_j\right)^p &\le C\rho^{-1}\left( \sum\limits_{j=2}^k \frac1{(1 + |x-q_j|)^{N+2s-\frac{\mu}{p}}} \frac{1}{(1 + |x-q_1|)^{\frac{\mu}{p}}}\right)^p \\
&\le C \left(\sum\limits_{j=2}^k
\frac1{|q_j-q_1|^{N+2s-\frac{\mu}{p}}}\right)^p \le C
(\frac{k}r)^{(N+2s)p-\mu}.
\end{split}
\end{equation*}

 The condition (\ref{1.8}) of $m$ leads obviously to $N+2s-\frac{\mu}{p}>1,\ N+2s-\mu>\F{m}{2}$ and $(N+2s)p-\mu>\F{N+2s}{2}$.
Thus we get the desired result by combining these above estimates.
\end{proof}

\section{Linearized theory}\label{s4}
\setcounter{equation}{0}

This section is devoted to solve a projected linear problem.

We consider the linear problem of finding $\varphi \in H^{2s}(\R^N)$ such that for certain constant $c$, we have
\begin{equation}\label{ls}
\begin{cases}
(-\Delta)^s \varphi + V(|x|)\varphi-pW^{p-1}\varphi = g + c\sum\limits_{j=1}^k Z_j \qquad\qquad \mbox{in} \ \R^N,\\
\ \varphi \in H_s, \smallskip\\
\displaystyle\int_{\R^N} Z_j \varphi =0, \qquad j=1, \ldots, k.
\end{cases}
\end{equation}
The constant $c$ is uniquely determined in terms of $\varphi$ and $g$ when $k$ is sufficient large from the equation
\begin{equation}\label{c}
\begin{split}
c\int_{\R^N} \sum\limits_{j=1}^k Z_j Z_1 &= \int_{\R^N}\left[ (-\Delta)^s \varphi + V(|x|)\varphi-pW^{p-1}\varphi \right]Z_1 - \int_{\R^N} gZ_1\\
&=\int_{\R^N}\left[ (-\Delta)^s Z_1 + V(|x|)Z_1-pW^{p-1}Z_1\right]\varphi + O(\|g\|_*)\int_{\R^N} \rho |Z_1|\\
&=\int_{\R^N} \left[(V-1) + p(W_1^{p-1}- W^{p-1})  \right]Z_1\varphi +O(\|g\|_*),
\end{split}
\end{equation}
where we use Lemma \ref{laa0} to obtain
\[
\int_{\R^N} \rho |Z_1| \le C\left(1 + \sum\limits_{j=2}^k \frac1{|q_1-q_j|^{\mu}}\right)\int_{\R^N} \frac{1}{(1+|x|)^{N+2s}} dx \le C.
\]
By direct calculation, it is easy to see that
\begin{equation}\label{z1}
\int_{\R^N} Z_1^2 = \int_{\R^N} \left(\F{\partial w(x-q_1)}{\partial r}\right)^2 dx=\int_{\R^N} \left(\F{\partial w(x-q_1)}{\partial x_1}\right)^2 dx =\frac1N\int_{\R^N} (w'(|x|))^2dx,
\end{equation}
and
\begin{align}
&\ \sum\limits_{j=2}^k \int_{\R^N} Z_jZ_1 = \sum\limits_{j=2}^k \int_{\R^N} w'(|x-q_1|) \frac{x-q_1}{|x-q_1|} \cdot (-\frac{q_1}r)w'(|x-q_j|) \frac{x-q_j}{|x-q_j|} \cdot (-\frac{q_j}r)\mathrm dx \nonumber\\
=&\ \sum\limits_{j=2}^k \int_{\R^N} w'(|x|) \frac{x^1}{|x|} w'(|x+q_1-q_j|) \frac{x+q_1-q_j}{|x+q_1-q_j|} \cdot \frac{q_j}r \mathrm dx \nonumber\\
=&\ \sum\limits_{j=2}^k \left(\int_{\left\{ |x| \leq \frac12|q_1 -q_j|\right\}}  + \int_{\left\{ |x| \geq \frac12|q_1 -q_j|\right\}} \right)w'(|x|)\frac{x^1}{|x|} w'(|x+q_1-q_j|) \frac{x+q_1-q_j}{|x+q_1-q_j|} \cdot \frac{q_j}r \mathrm dx \nonumber\\
\leq &\ C\sum\limits_{j=2}^k \frac1{|q_1-q_j|^{N+2s}} \int_{\R^N} |w'(|x|)|dx \leq C\sum\limits_{j=2}^k \frac1{|q_1-q_j|^{N+2s}} = O\left((\frac{k}r)^{N+2s}\right), \label{zj}
\end{align}
where $x=(x^1, \ldots, x^N)$.  It implies that $\{Z_j\}_{j=1}^k$ is approximately orthogonal provided $k$ large enough because of the symmetry.\par

As to the first  term in the right hand side of (\ref{c}), we do the following analysis.
\begin{align}
&\ \left| \int_{\Omega_1} (V(|x|)-1)Z_1 \varphi \right| \le C\|\varphi\|_* \int_{\Omega_1} |V(|x|)-1| \rho \frac1{(1 + |x-q_1|)^{N+2s}} \mathrm dx \nonumber \\
\le &\ C\|\varphi\|_* \left(\int_{\{x\in \Omega_1|  |x| \ge |q_1|/2\}} + \int_{\{  x\in \Omega_1| |x| \le |q_1|/2\}} \right)|V(|x|)-1| \rho \frac1{(1 + |x-q_1|)^{N+2s}}\mathrm dx  \nonumber\\
\le &\ C\|\varphi\|_* \int_{\{ x\in \Omega_1| |x| \ge |q_1|/2\}} \frac1{|x|^m}\frac1{(1 + |x-q_1|)^{N+2s}}  \sum\limits_{j=1}^k \frac1{(1 + |x-q_j|)^{\mu}} \mathrm dx \nonumber\\
& + C\|\varphi\|_* \int_{\{x\in \Omega_1| |x| \le |q_1|/2\}}\frac1{(1 + |x-q_1|)^{N+2s}}  \sum\limits_{j=1}^k \frac1{(1 + |x-q_j|)^{\mu}}\mathrm dx  \nonumber \\
 \le &\ C\|\varphi\|_* \frac1{r^m} \int_{\{x\in \Omega_1 |  |x| \geq |q_1|/2\}} \frac1{(1 + |x-q_1|)^{N+2s}}  \left(\frac1{(1 + |x-q_1|)^{\mu}} +\sum\limits_{j=2}^k \frac1{|q_1-q_j|^{\mu}} \right) \mathrm dx \nonumber \\
 &+  C\|\varphi\|_*\int_{\{x \in \Omega_1|  |x| \leq |q_1|/2\}} \frac1{r^{N+2s}}\left(\frac1{(1 + |x-q_1|)^{\mu}} +\sum\limits_{j=2}^k \frac1{|q_1-q_j|^{\mu}} \right)\mathrm dx \nonumber \\
 \le &\ C\|\varphi\|_*\left( \frac1{r^m}+ \frac1{r^{m}} (\frac{k}r)^{\mu} + \frac1{r^{\mu+2s }} + \frac1{r^{2s}} (\frac{k}r)^{\mu}\right)=o(\|\varphi\|_*), \qquad \mbox{as} \ k \rightarrow +\infty. \label{1.1}
\end{align}
In addition, note that, for any $j\neq 1$, $\ell\neq 1$ and $j\neq \ell$,
\begin{align*}
&\ \int_{\Omega_\ell} \F{\mathrm dx}{(1+|x-q_1|)^{N+2s}(1+|x-q_j|)^\mu} \\
\leq&\ \F{C}{|q_j-q_1|^{1-\F{m}{N+2s}}}\int_{\Omega_\ell} \left[ \F{1}{(1+|x-q_1|)^{\F{3}{2}N+2s+\sigma}} + \F{1}{(1+|x-q_j|)^{\F{3}{2}N+2s+\sigma}}\right] \mathrm dx \\
\leq &\ \F{C}{|q_j-q_1|^{1-\F{m}{N+2s}}}\left[ \F{1}{|q_\ell-q_1|^{\F{N}{2}+2s+\sigma}} + \F{1}{|q_\ell-q_j|^{\F{N}{2}+2s+\sigma}} \right],
\end{align*}
where Lemma \ref{l6.3} is used in the first inequality. It is checked that
\begin{align}
&\ \left| \int_{\R^N\setminus \Omega_1} (V-1)Z_1 \varphi \right| \le C\|\varphi\|_* \sum_{\ell=2}^k \int_{\Omega_\ell} |V(|x|)-1|  \frac{\rho(x)}{(1 + |x-q_1|)^{N+2s}} \mathrm dx  \nonumber \\
\leq &\ C\|\varphi\|_* \sum_{\ell=2}^k \int_{\Omega_\ell} \frac{1}{(1 + |x-q_1|)^{N+2s}}\left(\frac1{(1 + |x-q_1|)^{\mu}}+\frac1{(1 + |x-q_\ell|)^{\mu}} + \sum_{\substack{j=2 \\ j\neq\ell}}^k \frac1{(1 + |x-q_j|)^{\mu}}\right) \mathrm dx\nonumber\\
\leq&\ C\|\varphi\|_*\left( \sum_{\ell=2}^k \F{1}{|q_\ell-q_1|^{\mu+2s}} + \sum_{\ell=2}^k\F{1}{|q_\ell-q_1|^{\mu+2s-\sigma}} +  \sum_{\substack{\ell,j=2 \\ j\neq\ell}}^k \F{C}{|q_j-q_1|^{1-\F{m}{N+2s}}} \F{1}{|q_\ell-q_1|^{\F{N}{2}+2s+\sigma}}\right) \nonumber\\
 & + C\|\varphi\|_*\sum_{\substack{\ell,j=2 \\ j\neq\ell}}^k \F{C}{|q_j-q_1|^{1-\F{m}{N+2s}}} \F{1}{|q_\ell-q_j|^{\F{N}{2}+2s+\sigma}} \nonumber \\
\le &\ C\|\varphi\|_* \left(\F{k}{r}\right)^{\F{N}{2}+2s}. \label{1.2}
\end{align}
Thus from (\ref{1.1}) and (\ref{1.2}) we get that
\begin{equation*}
\int_{\mathbb R^N} (V-1)Z_1 \varphi = o(\|\varphi\|_*) .
\end{equation*}

When $1<p\leq 2$, it holds that
\begin{align*}
&\ \left| \int_{\Omega_1} (W_1^{p-1}- W^{p-1}) Z_1\varphi \mathrm dx \right|  \leq C \|\varphi\|_* \int_{\Omega_1}  \left( \sum\limits_{j=2}^k W_j\right)^{p-1} \rho |Z_1| \mathrm dx \\
\leq &\ C \|\varphi\|_* \left( \sum_{j=2}^k \F{1}{|q_1-q_j|^{N+2s}}\right)^{p-1} \int_{\Omega_1} \left[ \F{1}{(1+|x-q_1|)^\mu} + \sum_{j=2}^k \F{1}{|q_1-q_j|^\mu} \right] \F{1}{(1+|x-q_1|)^{N+2s}} \mathrm dx \\
\leq &\ C \|\varphi\|_* \left(\F{k}{r}\right)^{(N+2s)(p-1)},
\end{align*}
and, similar to (\ref{1.2}),
\begin{align*}
&\ \left| \int_{\R^N\setminus\Omega_1} (W_1^{p-1}- W^{p-1}) Z_1\varphi  \right| dx \leq C \|\varphi\|_* \sum_{\ell=2}^k \int_{\Omega_\ell}  \left( \sum\limits_{j=2}^k W_j\right)^{p-1} \rho |Z_1|dx \\
\leq &\  C \|\varphi\|_* \sum_{\ell=2}^k \int_{\Omega_\ell} \left[\F{1}{(1+|x-q_\ell|)^{(N+2s)(p-1)}}+\left(\F{k}{r}\right)^{(N+2s)(p-1)}\right] \\
&\ \qquad\qquad\qquad \cdot \left[\F{1}{(1+|x-q_1|)^\mu}+\F{1}{(1+|x-q_\ell|)^\mu}+\sum_{\substack{j=2 \\j \neq \ell}}^k\F{1}{(1+|x-q_j|)^\mu}\right]\F{\mathrm dx}{(1+|x-q_1|)^{N+2s}} \\
\leq &\ C \|\varphi\|_* \left(\F{k}{r}\right)^{\F{N}{2}+2s}.
\end{align*}
Thus we obtain, from the above two estimates, that
\begin{equation*}
\left| \int_{\mathbb R^N} (W_1^{p-1}- W^{p-1}) Z_1\varphi \mathrm dx \right| \leq C \|\varphi\|_* \left(\F{k}{r}\right)^{\min\left\{(N+2s)(p-1),\F{N}{2}+2s\right\}}.
\end{equation*}
%
For the case $p >2$, with Lemma \ref{laa2},
\begin{align}
& \left| \int_{\Omega_1} (W_1^{p-1}- W^{p-1}) Z_1\varphi \mathrm dx \right| \leq C \|\varphi\|_* \int_{\Omega_1}  \left( W_1^{p-2} \sum\limits_{j=2}^k W_j + ( \sum\limits_{j=2}^k W_j)^{p-1} \right)\rho |Z_1| dx  \nonumber \\
\leq &\ C \|\varphi\|_* \left[ \sum_{j=2}^k \F{1}{|q_j-q_1|^{N+2s}} + \left(\sum_{j=2}^k \F{1}{|q_j-q_1|^{N+2s}}\right)^{p-1}\right] \leq C \|\varphi\|_* \left(\F{k}{r}\right)^{N+2s}, \label{1.3}
\end{align}
and, also similar to (\ref{1.2}),
\begin{equation}
\begin{split}
 \left| \int_{\R^N\setminus\Omega_1} (W_1^{p-1}- W^{p-1}) Z_1\varphi \right| &\leq C \|\varphi\|_* \sum_{\ell=2}^k \int_{\Omega_\ell}  \left( W_1^{p-2} \sum\limits_{j=2}^k W_j + ( \sum\limits_{j=2}^k W_j)^{p-1} \right)\rho |Z_1|   \\
&\leq  C \|\varphi\|_*  \left(\F{k}{r}\right)^{\F{N}{2}+2s}, \label{1.4}
\end{split}
\end{equation}
on account that, in $\Omega_\ell$,
\begin{align*}
W_1^{p-2} \sum_{j=2}^k W_j &\leq \F{C}{|q_\ell-q_1|^{(N+2s)(p-2)}}\left[ \F{1}{(1+|x-q_\ell|)^{N+2s}} + \sum_{j=2, j\neq \ell}^k \F{1}{|q_j-q_\ell|^{N+2s}} \right] ,\\
\left(\sum_{j=2}^k W_j\right)^{p-1} &\leq \F{C}{(1+|x-q_\ell|)^{(N+2s)(p-1)}} + C\left(\sum_{j=2, j\neq \ell}^k \F{1}{|q_j-q_\ell|^{N+2s}}\right)^{p-1}.
\end{align*}
So it is concluded from (\ref{1.3}) and (\ref{1.4}) that
\begin{align*}
\left| \int_{\R^N} (W_1^{p-1}- W^{p-1}) Z_1\varphi \mathrm dx \right| \leq C \|\varphi\|_* \left( \F{k}{r} \right)^{\F{N}{2}+2s}.
\end{align*}


Combining the above inequalities leads to the following lemma right now.
\begin{lemma}\label{ce}
If $(\varphi, c)$ solves the problem (\ref{ls}), then
\[
c=o(\|\varphi\|_*) + O(\|g\|_*).
\]

\end{lemma}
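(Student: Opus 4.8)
The plan is to read the estimate straight off the identity (\ref{c}), which already isolates $c$: it expresses $c\int_{\R^N}\sum_{j=1}^k Z_jZ_1$ as the interaction integral $\int_{\R^N}\big[(V-1)+p(W_1^{p-1}-W^{p-1})\big]Z_1\varphi$ plus an $O(\|g\|_*)$ remainder (the remainder being $O(\|g\|_*)\int_{\R^N}\rho|Z_1|$, which is $O(\|g\|_*)$ since $\int_{\R^N}\rho|Z_1|\le C$ by Lemma \ref{laa0}). Thus the lemma reduces to two points: (i) bounding the coefficient $\int_{\R^N}\sum_{j=1}^k Z_jZ_1$ away from $0$, and (ii) showing the interaction integral is $o(\|\varphi\|_*)$.

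For (i), I would split $\int_{\R^N}\sum_{j=1}^k Z_jZ_1=\int_{\R^N}Z_1^2+\sum_{j=2}^k\int_{\R^N}Z_jZ_1$. By (\ref{z1}) the diagonal term equals the fixed positive constant $\frac1N\int_{\R^N}(w'(|x|))^2\,dx$, independent of $k$, while by (\ref{zj}) the off-diagonal terms sum to $O((k/r)^{N+2s})$; since $r$ is comparable to $k^{(N+2s)/(N+2s-m)}$ with $(N+2s)/(N+2s-m)>1$, one has $k/r\to0$, so for $k$ large the coefficient of $c$ is bounded below by $\tfrac12\cdot\frac1N\int_{\R^N}(w'(|x|))^2\,dx>0$.

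For (ii), I would estimate the two pieces of the interaction integral separately over $\Omega_1$ and over $\R^N\setminus\Omega_1=\bigcup_{\ell\ge2}\Omega_\ell$, using $|Z_1|\le C(1+|x-q_1|)^{-(N+2s)}$, $|\varphi|\le\|\varphi\|_*\rho$, the decay $V(|x|)-1=O(|x|^{-m})$, the pointwise bounds for products of the $W_j$, and the convolution-type estimates (Lemmas \ref{laa0}, \ref{l6.3}, \ref{laa2}) together with the summation estimate for $\sum_{i\ne\ell}|q_i-q_\ell|^{-\beta}$. The potential term yields $\int_{\R^N}(V-1)Z_1\varphi=o(\|\varphi\|_*)$ exactly as in the displays (\ref{1.1})--(\ref{1.2}); for the nonlinear term one treats $1<p\le2$ and $p>2$ via the algebraic inequalities $|W_1^{p-1}-W^{p-1}|\le C\big(\sum_{j\ge2}W_j\big)^{p-1}$ and $|W_1^{p-1}-W^{p-1}|\le C\big(W_1^{p-2}\sum_{j\ge2}W_j+(\sum_{j\ge2}W_j)^{p-1}\big)$ respectively, obtaining a bound $O\big(\|\varphi\|_*(k/r)^{\min\{(N+2s)(p-1),\,N/2+2s\}}\big)=o(\|\varphi\|_*)$. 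Combining (i) and (ii), $c\,(c_0+o(1))=o(\|\varphi\|_*)+O(\|g\|_*)$ with $c_0>0$, whence $c=o(\|\varphi\|_*)+O(\|g\|_*)$.

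The computations are routine once the preparatory lemmas are in hand; the one place that needs care is the off-diagonal bookkeeping on $\R^N\setminus\Omega_1$, where the product $(1+|x-q_1|)^{-(N+2s)}(1+|x-q_j|)^{-\mu}$ must be split so as to peel off the slowly decaying factor $|q_j-q_1|^{-(1-m/(N+2s))}$ and leave an integrable remainder, after which the resulting double sum over $\ell\neq j$ still converges to an $o(\|\varphi\|_*)$ quantity. Verifying that none of the exponents produced this way is borderline — which is exactly what the choice $\mu=\frac N2-\frac m{N+2s}+1+\sigma\in(\frac N2,N+2s)$ and the fact $k/r\to0$ guarantee — is where the hypotheses on $m$, $\mu$ and $p$ enter.
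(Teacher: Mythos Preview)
Your proposal is correct and follows exactly the paper's approach: the paper derives identity (\ref{c}), computes (\ref{z1})--(\ref{zj}) to bound the coefficient of $c$ away from zero, and then estimates the interaction integral over $\Omega_1$ and $\R^N\setminus\Omega_1$ via (\ref{1.1})--(\ref{1.2}) for the potential piece and via the same case split $1<p\le2$ versus $p>2$ for the $W_1^{p-1}-W^{p-1}$ piece, concluding the lemma by ``Combining the above inequalities''. Your outline reproduces this verbatim, including the off-diagonal splitting on $\Omega_\ell$ using Lemma~\ref{l6.3}.
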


In the rest of this section we shall build a solution to the problem (\ref{ls}).
\begin{proposition}\label{sls}
Given $k$ large enough, the exists a solution $\varphi=T(g)$ to (\ref{ls}) which defines a linear operator of $g$, provided that $\|g\|_* < +\infty$. Moreover,
\[
\|\varphi\|_* \le C \|g\|_* \qquad \text{and} \qquad c \le C \|g\|_*,
\]
where the positive constant $C$ is independent of $k$.

\end{proposition}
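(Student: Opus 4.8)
The plan is to solve the projected linear problem \eqref{ls} by a standard Lyapunov--Schmidt / a-priori-estimate scheme adapted to the fractional setting, using the weighted norm $\|\cdot\|_*$ introduced in Section \ref{s2}. First I would establish the crucial a-priori estimate: there is a constant $C$ independent of $k$ such that any solution $(\varphi,c)$ of \eqref{ls} satisfies $\|\varphi\|_*\le C\|g\|_*$. The argument is by contradiction: suppose there are $k_n\to\infty$, functions $g_n$ with $\|g_n\|_*\to 0$, and solutions $\varphi_n$ with $\|\varphi_n\|_*=1$. By Lemma \ref{ce} the multiplier satisfies $c_n=o(1)$, so $\varphi_n$ solves $(-\Delta)^s\varphi_n+V\varphi_n-pW^{p-1}\varphi_n = g_n + c_n\sum_j Z_j$ with the right-hand side having $\|\cdot\|_*\to 0$. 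Applying Lemma \ref{apriori} (with the potential $W=V-pW^{p-1}$, which is $\ge m>0$ outside a union of balls $B_R(q_j)$ once $R$ is fixed and $k$ is large, since $W^{p-1}(x-q_j)\to0$ away from $q_j$) gives $1=\|\varphi_n\|_*\le C(\|\varphi_n\|_{L^\infty(B)}+o(1))$, so $\|\varphi_n\|_{L^\infty(B)}$ stays bounded away from $0$; hence for some index, say after relabelling $j=1$, $\|\varphi_n\|_{L^\infty(B_R(q_1))}\ge c_0>0$.

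Next I would carry out the usual blow-up: set $\tilde\varphi_n(x):=\varphi_n(x+q_1)$. Since $\|\varphi_n\|_*=1$ and $\rho$ is bounded (Remark \ref{r1}), $\tilde\varphi_n$ is bounded in $L^\infty$; using Lemma \ref{2.3} (uniform $C^\beta$ bound coming from the $L^\infty$ bound on the right-hand side, which is controlled by $\rho$ times a bounded quantity) together with the equation, $\tilde\varphi_n$ is equicontinuous on compact sets, so along a subsequence $\tilde\varphi_n\to\varphi_\infty$ in $C_{loc}$. The limit $\varphi_\infty$ is bounded, even in $x_h$, and solves the limiting equation $(-\Delta)^s\varphi_\infty+\varphi_\infty-pw^{p-1}\varphi_\infty=0$ in $\R^N$ (the cross terms $W^{p-1}-W_1^{p-1}$ vanish in the limit away from the other centers, $V(|x+q_1|)\to 1$, and the far bumps contribute nothing locally). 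By the nondegeneracy result of Frank--Lenzmann--Silvestre quoted in the introduction, $\varphi_\infty=\sum_{i=1}^N b_i\partial_{x_i}w$. The evenness of $\varphi_\infty$ in $x_h$ forces $b_h=0$ for $h\ge 2$, so $\varphi_\infty=b_1\partial_{x_1}w = b_1 Z$ (the translate of $Z_1$). But the orthogonality $\int_{\R^N}Z_1\varphi_n=0$ passes to the limit after a tail estimate (using $|Z_1|\le C(1+|x-q_1|)^{-(N+2s)}$ to control the contribution outside a large ball uniformly, together with $\|\varphi_n\|_*=1$) to give $\int_{\R^N}Z\varphi_\infty=0$, i.e. $b_1\int (w')^2/N=0$ by \eqref{z1}, so $\varphi_\infty\equiv 0$. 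This contradicts $\|\varphi_n\|_{L^\infty(B_R(q_1))}\ge c_0>0$ passing to the limit (the bump at $q_1$ is where the $L^\infty$ mass sits). Hence the a-priori estimate holds, and then $c\le C\|g\|_*$ follows from Lemma \ref{ce}.

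For existence and linearity of $\varphi=T(g)$, I would set things up variationally in the Hilbert space $H$ of the Caffarelli--Silvestre extension restricted to the symmetric subspace, following Lemma \ref{2.1} and \eqref{ws}. Writing $\varphi=T_1(pW^{p-1}\varphi + g + c\sum_j Z_j)$ via the operator $T_m$ with $m=1$ (Lemma \ref{2.2}, \ref{2.3}) recasts \eqref{ls}, after imposing the $k$ orthogonality constraints, as $\varphi = K\varphi + h$ where $K$ is compact (the multiplication by $pW^{p-1}$ followed by $T_1$ gains compactness because $W^{p-1}$ decays) on the subspace $\{\varphi\in H_s:\int Z_j\varphi=0\}$, and the constant $c$ is determined by \eqref{c} as a bounded linear functional of $(\varphi,g)$. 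By the Fredholm alternative it suffices to show the homogeneous problem ($g=0$) has only the trivial solution; but that is exactly the a-priori estimate with $\|g\|_*=0$, giving $\|\varphi\|_*=0$. Thus $I-K$ is invertible, $T(g)$ is well-defined and linear, and the a-priori bound gives $\|T(g)\|_*\le C\|g\|_*$ with $C$ independent of $k$; combined with Lemma \ref{ce} this yields $c\le C\|g\|_*$ as well.

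The main obstacle is the a-priori estimate, specifically making the blow-up argument uniform in $k$: one must check that Lemma \ref{apriori} applies with constants independent of $k$ (this is where the hypothesis on $W=V-pW^{p-1}$ being $\ge m>0$ outside $\cup B_R(q_j)$ enters, and one needs $R$ chosen once and for all, then $k$ large), that the weighted norm $\|\cdot\|_*$ together with $\rho$ bounded (Remark \ref{r1}) transfers the local $L^\infty$/Hölder control to a genuine $C_{loc}$ limit, and above all that the orthogonality conditions survive the limit — the tail of $\int_{\R^N\setminus B_\Lambda(q_1)}Z_1\varphi_n$ must be shown to be small uniformly in $n$ as $\Lambda\to\infty$, which uses the fast decay of $Z_1$ against the slowly-decaying weight $\rho$. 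Once these uniformities are in place, the nondegeneracy of $w$ closes the argument exactly as in the $s=1$ case.
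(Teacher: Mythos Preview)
Your proposal is correct and follows essentially the same route as the paper: the a-priori estimate is proved by exactly the contradiction/blow-up argument you describe (this is the paper's Lemma \ref{l1}), and existence is then deferred to the Fredholm-type argument in \cite{DDW}, which is what you sketch. The one point the paper makes explicit that you gloss over is that before invoking the nondegeneracy result (stated for $\phi\in H^{2s}(\R^N)$) you must verify $\varphi_\infty\in L^2(\R^N)$; the paper does this via $\int_{B_R(0)}\varphi_\infty^2\le \|\varphi_n\|_*^2\int_{B_R(q_1)}\rho^2\le C$ uniformly in $R$ (Remark \ref{r1}), which then gives $\varphi_\infty\in H^{2s}$ from the equation.
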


The key difference of the proof between this proposition and Proposition 4.1 in \cite{DDW} is that now we should build an a priori estimate which is independent of $k$,
see the coming Lemma \ref{l1}. Once we get such estimate, the
remaining is just the same as that  in \cite{DDW}.

\begin{lemma}\label{l1}
Under the assumptions of Proposition \ref{sls}, there exists a positive constant $C$ independent of $k$ such that for any solution $\varphi$ with $\|\varphi\|_* < +\infty$, we have the following an a
priori estimate
\[
\|\varphi\|_* \le C \|g\|_*.
\]

\end{lemma}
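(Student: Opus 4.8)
The plan is to argue by contradiction using a blow-up / concentration-compactness scheme, exploiting the non-degeneracy of the ground state $w$ together with the a-priori estimate of Lemma \ref{apriori}. Suppose the conclusion fails: then there exist sequences $k \to \infty$, functions $g_k$ with $\|g_k\|_* \to 0$, constants $c_k$, and solutions $\varphi_k \in H_s$ of \eqref{ls} with $\|\varphi_k\|_* = 1$ for every $k$. First I would apply Lemma \ref{ce} (and the computation \eqref{z1}, which shows $\int Z_1^2$ is a positive constant independent of $k$, together with the approximate orthogonality \eqref{zj}) to get $c_k = o(\|\varphi_k\|_*) + O(\|g_k\|_*) = o(1)$; hence the right-hand side $g_k + c_k \sum_j Z_j$ of \eqref{ls} has $\ast$-norm going to zero, because $\|\sum_j Z_j\|_* \le C$ (each $Z_j$ is controlled by $(1+|x-q_j|)^{-(N+2s)} \le (1+|x-q_j|)^{-\mu}$, so the sum is $\le C\rho$).

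Next I would set up the contradiction with the potential term. Writing \eqref{ls} as $(-\Delta)^s\varphi_k + V(|x|)\varphi_k = pW^{p-1}\varphi_k + g_k + c_k\sum_j Z_j$, I would treat $pW^{p-1}\varphi_k$ as part of the inhomogeneity and apply Lemma \ref{apriori} with $W(x)$ replaced by $V(|x|)$ minus nothing — more precisely, the natural setup is to take the ``linear potential'' to be $V(|x|)$, whose infimum over $\R^N\setminus B$ (with $B = \cup_j B_R(q_j)$) is bounded below by $V_0/2 = 1/2 > 0$ for $R$ large, because $V(|x|) \to 1$. The term $pW^{p-1}\varphi_k$ is absorbed: on $\R^N \setminus B$ one has $W^{p-1}(x) \le C\sum_j (1+|x-q_j|)^{-(N+2s)(p-1)}$, and since $(N+2s)(p-1)$ is positive this is small away from the bumps; combined with $|\varphi_k| \le \rho$ it contributes a term that, after the estimates of Section \ref{s6} (the $\sum_{i\ne\ell}|q_i-q_\ell|^{-\beta}$ bound and Remark \ref{r1}), is $o(1)\cdot\rho$ in $\ast$-norm, so it can be moved to the right and reabsorbed, or handled by redefining the region $B$ to have slightly larger fixed radius. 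The upshot of Lemma \ref{apriori} is then
\[
1 = \|\varphi_k\|_* \le C\left(\|\varphi_k\|_{L^\infty(B)} + o(1)\right),
\]
so $\|\varphi_k\|_{L^\infty(B)} \ge 1/(2C) > 0$ for large $k$; by the symmetry of $H_s$ the sup over $B$ is attained (up to a rotation) near one fixed bump, say $q_1$, so $\|\varphi_k\|_{L^\infty(B_R(q_1))} \ge c_0 > 0$.

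Then comes the blow-up argument, which I expect to be the main obstacle. Define $\tilde\varphi_k(y) := \varphi_k(y + q_1)$; this is a bounded sequence in $L^\infty(B_R(0))$, and since each $\tilde\varphi_k$ solves $(-\Delta)^s\tilde\varphi_k + V(|y+q_1|)\tilde\varphi_k - pW(y+q_1)^{p-1}\tilde\varphi_k = \tilde g_k$ with $\tilde g_k \to 0$ in $\ast$-norm, the Hölder estimate of Lemma \ref{2.3} (applied on bounded sets, using $|\varphi_k| \le C\rho$ and the fact that $\rho$ is bounded, Remark \ref{r1}) gives local uniform equicontinuity; hence along a subsequence $\tilde\varphi_k \to \varphi_\infty$ locally uniformly in $\R^N$. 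Passing to the limit: $V(|y+q_1|) \to V_0 = 1$ since $|q_1| = r \to \infty$, and $W(y+q_1)^{p-1} = (w(y) + \sum_{j\ge 2} w(y+q_1-q_j))^{p-1} \to w(y)^{p-1}$ because the interaction terms $w(y+q_1-q_j)$ tend to $0$ uniformly on compacts (as $|q_1-q_j| \ge |q_1-q_2| \to \infty$). The limit $\varphi_\infty \in L^\infty(\R^N)$ (in fact $|\varphi_\infty| \le C(1+|y|)^{-\mu}$, hence $\varphi_\infty \in H^{2s}$) solves $(-\Delta)^s\varphi_\infty + \varphi_\infty - pw^{p-1}\varphi_\infty = 0$, so by non-degeneracy $\varphi_\infty = \sum_{i=1}^N a_i \partial_{x_i} w$. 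The evenness in $x_h$ for $h = 2,\dots,N$ inherited from $H_s$ forces $a_2 = \dots = a_N = 0$, and the orthogonality condition $\int Z_1\varphi_k = 0$ passes to the limit (using the decay of $Z_1$ to justify convergence of the integral over $\R^N$, splitting into a large ball and a tail controlled by $\|\varphi_k\|_*\int (1+|x|)^{-(N+2s)}$) to give $\int_{\R^N} \partial_r w \cdot \partial_{x_1} w = 0$, i.e. $a_1 \int (w')^2 \cos(\text{angle}) \dots$; carefully, $Z_1 = \partial_r W_1$ which at the limit is the radial derivative $\partial_{|y|}w$ pointing in the $x_1$-direction at $q_1$, so $\int Z_1 \varphi_\infty = a_1 \cdot \frac1N\int(w')^2 = 0$ forces $a_1 = 0$. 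Hence $\varphi_\infty \equiv 0$, contradicting $\|\varphi_\infty\|_{L^\infty(\overline{B_R(0)})} \ge c_0 > 0$. The delicate points requiring care are: (i) controlling the tail of $\varphi_k$ outside large balls uniformly in $k$ so that the weighted $L^\infty$ bound survives the limit and the orthogonality integral converges; (ii) ensuring the absorbed term $pW^{p-1}\varphi_k$ genuinely has small $\ast$-norm away from the bumps, which is where the exponent condition \eqref{1.8} and the summation lemmas of Section \ref{s6} enter; and (iii) justifying that the $L^\infty(B)$ norm localizes to a single bump via the discrete rotational symmetry. This completes the contradiction and proves $\|\varphi\|_* \le C\|g\|_*$.
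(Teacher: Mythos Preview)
Your proposal is correct and follows essentially the same approach as the paper: a contradiction argument combining Lemma~\ref{ce} (to kill $c_k$), Lemma~\ref{apriori} (to localize the mass of $\varphi_k$ to a single bump), translation to $q_1$, compactness via the H\"older estimate of Lemma~\ref{2.3}, and finally the non-degeneracy of $w$ together with the symmetry and orthogonality constraints to force the limit to vanish. The only cosmetic differences are that the paper applies Lemma~\ref{apriori} directly with the potential $V-pW^{p-1}$ (showing it is $\ge \tfrac12 V$ outside $\cup_j B_R(q_j)$), which is exactly your ``or handled by redefining the region $B$'' option and cleaner than treating $pW^{p-1}\varphi_k$ as an inhomogeneity; and the paper verifies $\varphi_0\in L^2$ via $\int_{B_R(q_1)}\rho^2\le C$ (Remark~\ref{r1}) rather than through your pointwise decay bound $|\varphi_\infty|\le C(1+|y|)^{-\mu}$, though both routes work.
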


\begin{proof}
We argue by contradiction. Suppose that there are $g_k$, $r_k \in \left[\frac1{C_0}k^{\frac{N+2s}{N+2s-m}}, C_0k^{\frac{N+2s}{N+2s-m}}\right]$ and $\varphi_k$  solving   (\ref{ls})
for $g=g_k, r=r_k$ with $\|g_k\|_* \rightarrow 0$ and $\|\varphi_k\|_* \ge C' >0$.
We may assume that $\|\varphi_k\|_* =1$. For simplicity, we drop the subscript $k$. \par

From the conditions of potential $V$, obviously $\inf_{\R^N} V >0$.
On the other hand, in the equation of $\varphi$,
 \[
(-\Delta)^s \varphi + (V-pW^{p-1})\varphi =g + c\sum\limits_{j=1}^k Z_j,
 \]
we find that
\begin{equation*}
\begin{split}
V(x) - pW^{p-1}(x) &\ge V(x) -C\left( \frac1{(1 + |x-q_1|)^{N+2s}} + \sum\limits_{j=2}^k \frac1{|q_j-q_1|^{N+2s}} \right)^{p-1}\\
&\ge V(x) - C\left( \frac1{(1 + |x-q_1|)^{N+2s}} + (\frac{k}r)^{N+2s} \right)^{p-1}
\ge \frac12 V(x)
\end{split}
\end{equation*}
for any $x \in \Omega_1\setminus B_R(q_1)$, which leads to
\[
\inf_{\R^N \setminus \underset{j=1}{\overset{k}{\cup}} B_R(q_j)} (V(x) - pW^{p-1}(x)) \ge \frac12 \inf_{\R^N} V(x) >0.
\]
Accordingly, by Lemma \ref{apriori} and Lemma \ref{ce}, it holds that
\begin{equation*}
\|\varphi\|_* \le C\left(\|\varphi\|_{L^\infty(\underset{j=1}{\overset{k}{\cup}} B_R(q_j))}  + \|g\|_* + |c| \bigg\|\sum\limits_{j=1}^k Z_j\bigg\|_*  \right)\\
 \le C\|\varphi\|_{L^\infty(\underset{j=1}{\overset{k}{\cup}} B_R(q_j))} + o(1),
\end{equation*}
from which we may assume that, up to a subsequence,
\begin{equation}\label{3.1}
\|\varphi\|_{L^\infty(B_R(q_1))} \ge \gamma >0.
\end{equation}
Let us set $\tilde{\varphi}(x)=\varphi(x+q_1)$, then $\tilde{\varphi}$ satisfies
\begin{equation}
(-\Delta)^s \tilde{\varphi} + V(|x+q_1|)\tilde{\varphi} - pw^{p-1}(x)\tilde{\varphi}=\tilde{g}
\end{equation}
where
\begin{equation}\label{1.5}
\begin{split}
\tilde{g}(x)=&g(x+q_1) + c\left(Z_1(x+q_1) + \sum\limits_{j=2}^k Z_j(x+q_1)  \right) \\
&+ p\left[\left(w(x) + \sum\limits_{j=2}^k w(x+q_1-q_j)\right)^{p-1} - w^{p-1}(x)  \right]\tilde{\varphi}.
\end{split}
\end{equation}
For any point $x$ in an arbitrarily compact set of $\R^N$, we have, from Remark \ref{r1}, that
\[
|g(x+q_1)| \le \|g\|_*\rho(x+q_1) \le C \|g\|_* =o(1).
\]
It is easy to see that $V(x +q_1) \rightarrow 1$,
\[
c=o(\|\varphi\|_*) + O(\|g\|_*) \rightarrow 0
\]
and
\begin{equation*}
\begin{split}
\left|Z_1(x+q_1) + \sum\limits_{j=2}^k Z_j(x+q_1)\right| &\le \frac{C}{(1 + |x+q_1|)^{N+2s}} + C\sum\limits_{j=2}^k \frac1{|x + q_1-q_j|^{N+2s}}\\
 &\le C + C\sum\limits_{j=2}^k \frac1{|q_1-q_j|^{N+2s}} \le C.
\end{split}
\end{equation*}
For the last term in (\ref{1.5}), as $1 < p\leq 2$,
\begin{equation*}
\begin{split}
& \left| \left[\left(w(x) + \sum\limits_{j=2}^k w(x+q_1-q_j)\right)^{p-1} - w^{p-1}(x)  \right]\tilde{\varphi} \right|\\
\le & C \left(\sum\limits_{j=2}^k w(x+q_1-q_j)\right)^{p-1} \le  C \left(\sum\limits_{j=2}^k  \frac1{|x+q_1 -q_j|^{N+2s}})\right)^{p-1}\\
\le & C \left(\sum\limits_{j=2}^k \frac1{|q_1-q_j|^{N+2s}}\right)^{p-1} \le  C \left(\frac{k}r\right)^{(N+2s)(p-1)},
\end{split}
\end{equation*}
while for $p >2$,
\begin{align*}
& \left| \left[\left(w(x) + \sum\limits_{j=2}^k w(x+q_1-q_j)\right)^{p-1} - w^{p-1}(x)  \right]\tilde{\varphi} \right|\\
\le & C w^{p-2}\sum\limits_{j=2}^k w(x+q_1-q_j) + C \left(\sum\limits_{j=2}^k w(x+q_1-q_j)\right)^{p-1}\\
\le &\  \sum\limits_{j=2}^k  \frac{C}{|x+q_1 -q_j|^{N+2s}} + C \left(\sum\limits_{j=2}^k  \frac1{|x+q_1 -q_j|^{N+2s}}\right)^{p-1}\\
\leq & \sum\limits_{j=2}^k \frac{C}{|q_1-q_j|^{N+2s}} +  C \left(\sum\limits_{j=2}^k \frac1{|q_1-q_j|^{N+2s}}\right)^{p-1}\\
\leq &  C(\frac{k}r)^{N+2s}+ C (\frac{k}r)^{(N+2s)(p-1)} \leq C\left(\frac{k}{r}\right)^{N+2s}.
\end{align*}
Hence $\tilde{g} \rightarrow 0$ uniformly on any compact set of $\R^N$ as $k \rightarrow \infty$. Meanwhile, from
\[
(-\Delta)^s \tilde{\varphi} + \tilde{\varphi} = \left(1-V(x+q_1)\right)\tilde{\varphi} + pw^{p-1}\tilde{\varphi} + \tilde{g},
\]
and Lemma \ref{2.3}, we obtain that
\begin{equation*}
\sup_{x\neq y}\frac{\left|\tilde{\varphi}(x) - \tilde{\varphi}(y)\right|}{|x-y|^\beta}  \le C\left(\|(1-V)\tilde{\varphi}\|_{L^\infty} + \|w^{p-1}\tilde{\varphi}\|_{L^\infty} +
\|\tilde{g}\|_{L^\infty} \right)\\
 \le C(\|\varphi\|_* + \|\tilde{g}\|_{L^\infty}) \le C
\end{equation*}
where $\beta=\min\{1, 2s\}$. Hence up to a subsequence, we may assume that $\tilde{\varphi} \rightarrow \varphi_0$ uniformly on any compact set. It is easy to observe that $\varphi_0$ satisfies
\begin{equation}
\begin{cases}
(-\Delta)^s \varphi_0 + \varphi_0-pw^{p-1}\varphi_0 =0 \qquad &\mbox{in}\ \R^N, \smallskip \\
\ \varphi_0 \in H_s, \medskip \\
\displaystyle\int_{\R^N} \frac{\partial w}{\partial x^1} \varphi_0 =0,
\end{cases}
\end{equation}
where $x=(x^1, \ldots, x^N)$. Besides, we know, from Remark \ref{r1}, that
\begin{equation*}
\int_{B_R(0)} \varphi_0^2 \leq \int_{B_R(0)}\tilde\varphi_k^2 = \int_{B_R(q_1)}\varphi_k^2 \leq \|\varphi_k\|_*^2 \int_{B_R(q_1)}\rho^2 \leq C,
\end{equation*}
which means that $\varphi_0 \in L^2(\R^N)$. Then the non-degeneracy result in \cite{FLS} implies  that $\varphi_0$ must be a linear combination of the partial derivatives $\frac{\partial w}{\partial x^i}, i=1, \ldots, N$. But the
symmetry and orthogonality condition yield that $\varphi_0 \equiv 0$, which is a contradiction to (\ref{3.1}).
The lemma is then proved.
\end{proof}

\section{The variational reduction and the proof of Theorem \ref{t2}} \label{s5}
\setcounter{equation}{0}

In this section  we first solve the intermediate nonlinear problem (\ref{3.2}), i.e.
\begin{equation*}
\begin{cases}
(-\Delta)^s \varphi(x) + V(|x|)\varphi(x)-pW^{p-1}\varphi(x) = E + N(\varphi) + c\sum\limits_{j=1}^k Z_j \qquad \mbox{in} \ \ \R^N,\\
\ \varphi \in H_s, \smallskip\\
\displaystyle \int_{\R^N} Z_j \varphi =0 \qquad \text{for any } j=1, \ldots, k.
\end{cases}
\end{equation*}
Then we solve the final nonlinear problem (\ref{varphi}) variationally.

\begin{proposition}\label{nls}
Assume that $k$ large enough, for any $r \in \left[\frac1{C_0}k^{\frac{N+2s}{N+2s-m}}, C_0k^{\frac{N+2s}{N+2s-m}} \right]$, the problem (\ref{3.2}) has a unique small solution $\varphi=\Phi(r)$ with
\[
\|\varphi\|_* \le C\left( \frac{k}{r} \right)^{\min\{N+2s, (N+2s)p-\mu\} }+  \frac{C}{r^{N+2s-\mu}} + \frac{C}{r^m}= o\left(\F{1}{r^{m/2}}\right).
\]
Furthermore, the map $r \rightarrow \Phi(r)$ is of class $C^1$, and
\[
\|\Phi'(r)\|_* \le C\left( \frac{k}{r} \right)^{\min\{N+2s, (N+2s)p-\mu\} }+  \frac{C}{r^{N+2s-\mu}} + \frac{C}{r^m}.
\]
\end{proposition}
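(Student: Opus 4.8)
The plan is to solve \eqref{3.2} by a fixed-point argument built on the linear theory of Section \ref{s4}. Given the linear operator $T$ from Proposition \ref{sls}, which satisfies $\|T(g)\|_* \le C\|g\|_*$ uniformly in $k$, rewrite \eqref{3.2} as the fixed-point equation $\varphi = \mathcal{A}(\varphi) := T\big(E + N(\varphi)\big)$ in the ball $\mathcal{B} = \{\varphi \in H_s : \|\varphi\|_* \le \Lambda\}$, where $\Lambda := C_1\big[(k/r)^{\min\{N+2s,\,(N+2s)p-\mu\}} + r^{-(N+2s-\mu)} + r^{-m}\big]$ for a suitably large $C_1$. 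First I would record the pointwise estimate for the nonlinear term: from $N(\varphi) = (W+\varphi)_+^p - W^p - pW^{p-1}\varphi$ one gets, by the mean value theorem and the fact that $W \le C\rho$ on each $\Omega_j$ together with Remark \ref{r1}, the bounds $\|N(\varphi)\|_* \le C\|\varphi\|_*^{\min\{p,2\}}$ and $\|N(\varphi_1) - N(\varphi_2)\|_* \le C(\|\varphi_1\|_* + \|\varphi_2\|_*)^{\min\{p-1,1\}}\|\varphi_1 - \varphi_2\|_*$ — these require splitting $\R^N$ into the regions $\Omega_j$ and using $\rho^{p} \le C\rho$ (valid since $\rho \le C$), exactly as in the computations already carried out for $E$ in Lemma \ref{ee}. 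Combining with $\|E\|_* \le \Lambda/(2C_1)$ from Lemma \ref{ee} and choosing $\Lambda$ small (which it is, since $\Lambda = o(r^{-m/2}) \to 0$), one checks $\mathcal{A}: \mathcal{B} \to \mathcal{B}$ and $\mathcal{A}$ is a contraction; the contraction mapping theorem then yields the unique solution $\varphi = \Phi(r)$ with $\|\Phi(r)\|_* \le \Lambda$, and Lemma \ref{ce} or the corollary to Proposition \ref{sls} gives the corresponding bound on $c = c(r)$.

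For the $C^1$ dependence on $r$, I would differentiate the fixed-point identity formally. Writing the equation as $\mathcal{S}(\varphi, r) = \varphi - T\big(E(r) + N(\varphi, r)\big) = 0$, the operator $\mathcal{S}$ is $C^1$ jointly in $(\varphi, r)$ because $W$, hence $E$ and $N$, depend smoothly on $r$ through the translations $q_j = q_j(r)$ (each $q_j$ is a linear function of $r$) and the least-energy solution $w$ is smooth. The partial derivative $\partial_\varphi \mathcal{S}(\Phi(r), r) = \mathrm{Id} - T\big(\partial_\varphi N(\Phi(r), r)\cdot\big)$ is invertible since $\|T \circ \partial_\varphi N(\Phi(r), r)\|_{\mathcal{L}(*)} \le C\|\Phi(r)\|_*^{\min\{p-1,1\}} \to 0$, so it is a small perturbation of the identity. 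The implicit function theorem then gives that $r \mapsto \Phi(r)$ is $C^1$, and $\Phi'(r) = -[\partial_\varphi \mathcal{S}]^{-1}\partial_r \mathcal{S}$; since $\partial_r \mathcal{S} = -T\big(\partial_r E(r) + \partial_r N(\Phi(r), r)\big)$, the bound on $\|\Phi'(r)\|_*$ reduces to estimating $\|\partial_r E\|_*$ and $\|\partial_r N\|_*$, and a differentiation of the estimates in Lemma \ref{ee} — noting that $\partial_r$ acting on a factor $(1+|x-q_j|)^{-\alpha}$ produces at worst a comparable factor times $O(1)$, and $\partial_r$ on powers of $k/r$ loses a factor $r^{-1}$ which only improves the bound — yields the claimed estimate $\|\Phi'(r)\|_* \le C\big[(k/r)^{\min\{N+2s,\,(N+2s)p-\mu\}} + r^{-(N+2s-\mu)} + r^{-m}\big]$.

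The main obstacle is obtaining the nonlinear and difference estimates on $N(\varphi)$ with constants \emph{independent of $k$}: because the approximate solution is a sum of $k$ spatially separated bumps with only polynomial decay, controlling cross terms such as $W_i^{p-2}W_j$ when summing over $i \ne j$ requires the interaction estimates from Section \ref{s6} (Lemmas \ref{laa0}, \ref{laa2}, \ref{l6.3}) and the specific choice $\mu = \tfrac{N}{2} - \tfrac{m}{N+2s} + 1 + \sigma$, so that the geometric sums $\sum_{j\ne 1}|q_1 - q_j|^{-\alpha}$ stay bounded or small. Once the weighted $L^\infty$ algebra with weight $\rho$ is set up so that $\rho^2 \le C\rho$ and the cross-term sums are absorbed into a single $\rho$, the contraction estimate is routine; the differentiability in $r$ is then a standard consequence of the implicit function theorem, as in \cite{DDW}, and I would not belabor it.
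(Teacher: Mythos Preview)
Your proposal is correct and follows essentially the same route as the paper: both recast \eqref{3.2} as the fixed-point problem $\varphi = T(E + N(\varphi))$, bound $\|N(\varphi)\|_*$ by $C\|\varphi\|_*^{\min\{p,2\}}$ using $\|\rho\|_{L^\infty}\le C$ (Remark \ref{r1}), run the contraction mapping theorem on a small ball in $\|\cdot\|_*$, and defer the $C^1$-in-$r$ argument to \cite{DDW}. The only cosmetic difference is that the paper takes the ball of radius a generic small $s_0$ and then reads off the sharper bound from $\|E\|_*$ afterward, whereas you build the explicit $\Lambda$ into the ball from the start; your sketch of the implicit-function-theorem argument for $\Phi'(r)$ is also slightly more detailed than what the paper actually writes.
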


\begin{proof}
Problem (\ref{3.2}) can be written as the fixed point problem
\[
\varphi=T(E + N(\varphi))=:\mathcal A(\varphi) \qquad  \text{for } \varphi \in H_s.
\]
Let
\[
\mathfrak{F}=\{ \varphi \in H_s \mid \|\varphi\|_* \le s_0\},
\]
where $s_0>0$ is a small number determined later.

If $\varphi \in \mathfrak{F}$,  either $1 < p \le 2$,
\[
\|N(\varphi)\|_* \le C\|\varphi^p\|_* \le C \|\varphi\|_*^p \|\rho\|^{p-1}_{L^{\infty}(\R^N)} \le C\|\varphi\|_*^p;
\]
or $p >2$,
\[
\|N(\varphi)\|_*   \le C \| \varphi^2 W^{p-2}\|_* + C \|\varphi^p\|_*
 \le C \|\varphi\|_*^2 |\rho W|_{L^{\infty}(\R^N)} + C\|\varphi\|_*^p \|\rho^{p-1}\|_{L^{\infty}(\R^N)} \le C \|\varphi\|_*^2 .
\]

By Proposition \ref{sls} and Lemma \ref{ee},
\[
\|\mathcal A(\varphi)\|_* \le C\left(\| E\|_* + \|N(\varphi)\|_* \right) \le C \|E\|_* + C(\|\varphi\|_* + \|\varphi\|_*^{p-1})\|\varphi\|_* \le s_0
\]
if we choose $C(s_0+s_0^{p-1}) \le \frac12$ and $k$ large enough such that
\[
C\left( \frac{k}{r} \right)^{\min\{N+2s, (N+2s)p-\mu\} }+  \frac{C}{r^{N+2s-\mu}} + \frac{C}{r^m} \le \frac12s_0.
\]

On the other hand, for any $\varphi_i \in H_s$, $i=1, 2$,
\[
|N(\varphi_1) - N(\varphi_2)|=|N'(t)(\varphi_1 - \varphi_2)|
\]
where $t $ lies between $\varphi_1$ and $\varphi_2$.

For $1 < p\le 2$, $|N'(t)| \le C|t|^{p-1} \le C (|\varphi_1|^{p-1} + |\varphi_2|^{p-1})$ which tells us that
\[
\begin{split}
\|N(\varphi_1)-N(\varphi_2) \|_* & \le C\|\varphi_1 - \varphi_2\|_* (\|\varphi_1\|^{p-1}_{L^{\infty}(\R^N)} + |\varphi_2|^{p-1}_{L^{\infty}(\R^N)})\\
& \le C\|\varphi_1 - \varphi_2\|_* (\|\varphi_1\|^{p-1}_* + \|\varphi_2\|^{p-1}_*)\|\rho\|^{p-1}_{L^{\infty}(\R^N)})\\
& \le C\|\varphi_1 - \varphi_2\|_* (\|\varphi_1\|^{p-1}_* + \|\varphi_2\|^{p-1}_*)\\
& \le Cs_0^{p-1}\|\varphi_1 - \varphi_2\|_*\le \frac12 \|\varphi_1 - \varphi_2\|_*
\end{split}
\]
provided $s_0$ small enough. And for $p>2$, $|N'(t)| \le C(W^{p-2}|t| + |t|^{p-1})$, from which we can deduce that
\[
\begin{split}
&\ \|N(\varphi_1)-N(\varphi_2) \|_* \\
\le&\ C\|\varphi_1 - \varphi_2\|_*\left[ \|\rho W\|_{L^{\infty}(\R^N)})(\|\varphi_1\|_* + \|\varphi_2\|_*) + (\|\varphi_1\|^{p-1}_* +
\|\varphi_2\|^{p-1}_*)\|\rho\|^{p-1}_{L^{\infty}(\R^N)})\right]\\
 \le&\ C \|\varphi_1 - \varphi_2\|_*\left( \|\varphi_1\|_* + \|\varphi_2\|_*+ \|\varphi_1\|^{p-1}_* +
\|\varphi_2\|^{p-1}_* \right)\\
\le &\ C(s_0 + s_0^{p-1})\|\varphi_1 - \varphi_2\|_* \le \frac12 \|\varphi_1 - \varphi_2\|_*
\end{split}
\]
with $s_0$ small enough.\par

Thus we obtain that $\mathcal A$ is a contraction mapping and the problem (\ref{3.2}) has a unique solution $\varphi$. Obviously according to Lemma $\ref{ee}$,
\[
\|\varphi\|_* \le C\left( \frac{k}{r} \right)^{\min\{N+2s, (N+2s)p-\mu\} }+  \frac{C}{r^{N+2s-\mu}} + \frac{C}{r^m}.
\]

For the proof of $\Phi(r) \in C^1$, please refer to \cite{DDW}. Here we don't repeat it.
\end{proof}

Next, we will use the above introduced ingredients to find existence results for the nonlinear problem (\ref{varphi}), i.e. the equation
\begin{equation}\label{pq}
(-\Delta)^s u + V(x)u - u_+^p=0.
\end{equation}
Set  the following energy  functional
\begin{equation}\label{functional}
J(u)=\frac12\int_{\R^N} u(-\Delta)^s u + V(x)u^2 - \frac1{p+1}\int_{\R^N} u_+^{p+1}
\end{equation}
whose nontrivial critical points are solutions to (\ref{p}).

We want to find a solution of (\ref{pq}) with the form $U =W+\varphi$ where $\varphi=\Phi(r)$ is found in Proposition \ref{nls}. Then it is easy to observe that
\[
(-\Delta)^s U + VU - U^p_+ = c\sum\limits_{j=1}^k Z_j.
\]
Hence we need to find suitable $r$ such that the coefficient $c=0$. The problem can be formulated variationally as follows.

\begin{lemma}\label{l2}
Let $F(r) = J(U)=J(W + \Phi(r))$, then $c=0$ if and only if $F'(r)=0$.
\end{lemma}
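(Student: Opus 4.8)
\textbf{Proof strategy for Lemma \ref{l2}.} The plan is to exploit the fact that $U = W + \Phi(r)$ solves the projected equation
\[
(-\Delta)^s U + V U - U_+^p = c\sum_{j=1}^k Z_j
\]
exactly, so that $U$ fails to be a genuine critical point of $J$ only through the one-dimensional obstruction $c\sum_j Z_j$. Since $F(r) = J(U)$ with $U = W + \Phi(r)$, I would differentiate $F$ in $r$ using the chain rule. Writing $\partial_r U = \partial_r W + \Phi'(r)$, and noting that $\partial_r W = \sum_j \partial_r W_j = \sum_j Z_j$, we get
\[
F'(r) = \big\langle J'(U), \partial_r U \big\rangle = \Big\langle c\sum_{j=1}^k Z_j,\ \sum_{j=1}^k Z_j + \Phi'(r) \Big\rangle = c\int_{\R^N}\Big(\sum_{j=1}^k Z_j\Big)\Big(\sum_{j=1}^k Z_j + \Phi'(r)\Big).
\]
Here $\langle J'(U),\psi\rangle = \int_{\R^N}\psi\big[(-\Delta)^s U + VU - U_+^p\big]$ for $\psi \in H_s$; this identity is legitimate because $\partial_r U \in H_s$ (both $W$ and $\Phi(r)$ are built to respect the symmetry class $H_s$, and Proposition \ref{nls} gives that $r\mapsto\Phi(r)$ is $C^1$ into $H_s$).

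So everything reduces to showing that the scalar factor $\int_{\R^N}\big(\sum_j Z_j\big)\big(\sum_j Z_j + \Phi'(r)\big)$ is bounded away from zero for $k$ large. By (\ref{z1}) we have $\int_{\R^N} Z_1^2 = \frac1N\int_{\R^N}(w'(|x|))^2\,dx =: \kappa > 0$, and by (\ref{zj}) the cross terms satisfy $\sum_{j\ne\ell}\int_{\R^N} Z_jZ_\ell = O((k/r)^{N+2s})$, which is $o(1)$ since $r \sim k^{(N+2s)/(N+2s-m)}$ forces $k/r \to 0$. By symmetry of $H_s$ all $k$ diagonal terms are equal, so $\int_{\R^N}\big(\sum_j Z_j\big)^2 = k\kappa + o(k)$. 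For the term involving $\Phi'(r)$: by Proposition \ref{nls}, $\|\Phi'(r)\|_* \le C(k/r)^{\min\{N+2s,(N+2s)p-\mu\}} + Cr^{-(N+2s-\mu)} + Cr^{-m} = o(1)$, and since $\big|\sum_j Z_j\big| \le C\rho$ pointwise (as each $|Z_j| \le C(1+|x-q_j|)^{-(N+2s)} \le C(1+|x-q_j|)^{-\mu}$), we can estimate $\big|\int_{\R^N}\big(\sum_j Z_j\big)\Phi'(r)\big| \le C\|\Phi'(r)\|_*\int_{\R^N}\rho\,\big|\sum_j Z_j\big| \le Ck\|\Phi'(r)\|_* = o(k)$, using $\int_{\Omega_1}\rho|Z_1| \le C$ as in (\ref{c}) together with the $k$-fold symmetry. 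Hence the scalar factor equals $k\kappa + o(k) = k(\kappa + o(1)) \ne 0$ for $k$ large, and therefore $F'(r) = 0 \iff c = 0$.

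The main obstacle, and the step deserving the most care, is the justification of the chain-rule identity $F'(r) = \langle J'(U),\partial_r U\rangle$ together with the membership $\partial_r U \in H_s$ — one must check that the pairing $\langle J'(U),\psi\rangle$ genuinely vanishes on the orthogonal complement issues, i.e. that testing the projected equation against $\partial_r U$ really does pick out only $c\int(\sum_j Z_j)\partial_r U$ and nothing else. This is where the $C^1$ regularity of $r\mapsto\Phi(r)$ from Proposition \ref{nls} and the fact that differentiating the constraints $\int_{\R^N} Z_j\Phi(r) = 0$ are used; the constraint derivatives contribute terms that are absorbed since $\partial_r Z_j$ pairs with $\Phi(r)$ which is small. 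Everything else is a routine bookkeeping of the decay estimates already assembled in Sections \ref{s2}–\ref{s4}, and the quantitative smallness of $k/r$, $r^{-m}$, and $\|\Phi'(r)\|_*$ built into the range $r \in [\frac1{C_0}k^{(N+2s)/(N+2s-m)}, C_0 k^{(N+2s)/(N+2s-m)}]$.
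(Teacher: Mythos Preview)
Your proposal is correct and follows essentially the same route as the paper: differentiate $F(r)=J(U(r))$ by the chain rule to obtain $F'(r)=c\int_{\R^N}\big(\sum_jZ_j\big)\big(\sum_iZ_i+\Phi'(r)\big)$, then use (\ref{z1}), (\ref{zj}) and Proposition~\ref{nls} to see this equals $ck(\kappa+o(1))\ne0$ for $k$ large. The paper justifies the differentiation of the nonlocal quadratic form via the Caffarelli--Silvestre extension, and your closing remark about differentiating the orthogonality constraints is unnecessary---once $r\mapsto U(r)$ is $C^1$, the chain rule gives $F'(r)=\langle J'(U),\partial_rU\rangle$ directly, with no constraint terms appearing or needing absorption.
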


\begin{proof}
Assume that $\tilde{U}$ is the unique $s$-harmonic extension of $U=W + \Phi(r)$, then the well-known computation by Caffarelli and Silvestre \cite{CS} shows that
\[
F(r)=\frac12\int_{\R^{N+1}_+} |\nabla \tilde{ U}|^2 y^{1-2s}\mathrm dx \mathrm dy + \frac12\int_{\R^N} V(|x|) U^2  -\frac1{p+1}\int_{\R^N} U_+^{p+1}.
\]
So with $\partial_r U = \partial_r W + \Phi'(r)=\sum\limits_{i=1}^k Z_i + \Phi'(r)$, (\ref{z1}), (\ref{zj}) and Proposition \ref{nls},
\[
\begin{split}
F'(r)&=\int_{\R^{N+1}_+} \nabla \tilde{U} \cdot\nabla (\partial_r \tilde{ U}) y^{1-2s} + \int_{\R^N} V(|x|) U \partial_r U - \int_{\R^N} U_+^p \partial_r U\\\
&=\int_{\R^N} \left((-\Delta )^sU + V(|x|)U -U_+^p  \right)\partial_r U =c\sum\limits_{j=1}^k \int_{\R^N} Z_j \partial_r U\\
&=c\sum\limits_{i, j=1}^k \int_{\R^N} Z_jZ_i  + c\sum\limits_{ j=1}^k \int_{\R^N} Z_j \Phi'(r)\\
&=c\left(\frac{k}N\int_{\R^N} (w'(|x|))^2dx +k O((\frac{k}r)^{N+2s}) + O(\|\Phi'(r)\|_*\sum\limits_{j=1}^k \int_{\R^N} |Z_j|\rho) \right)\\
&=ck\left(\frac{1}N\int_{\R^N} (w'(|x|))^2dx + o(1)\right),
\end{split}
\]
with $k$ large enough. The proof is finished.
\end{proof}

Now our task is to find a critical of the functional $F(r)$.  We have the following expansion of $F(r)$.
\begin{proposition}
There exists $k_0$ such that for any $k \ge k_0, r \in I_0$, the following expansion holds
\begin{equation}
F(r)=k\left[A_1 + \frac{B_1}{r^m}- \frac{B_2k^{N+2s}}{r^{N+2s}} + o\left( r^{-m}  \right)\right]
\end{equation}
where $A_1$, $B_1$, $B_2$ are universal positive constants defined in Proposition \ref{EWA} and the interval $I_0$ is given by
\[
I_0=\left[\frac1{C_0}k^{\frac{N+2s}{N+2s-m}}, C_0k^{\frac{N+2s}{N+2s-m}} \right].
\]
\end{proposition}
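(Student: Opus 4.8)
The plan is to compute $F(r) = J(W + \Phi(r))$ by expanding the energy functional around the approximate solution $W$ and controlling the contribution of the correction $\varphi = \Phi(r)$. First I would write
\[
F(r) = J(W) + DJ(W)[\varphi] + \frac12 D^2J(W)[\varphi,\varphi] + \text{(higher order in }\varphi\text{)},
\]
and observe that, since $\|\varphi\|_* = o(r^{-m/2})$ by Proposition \ref{nls} and the quadratic form $D^2J(W)$ is controlled using the $*$-norm together with Remark \ref{r1} (which bounds $\|\rho\|_\infty$ and $\int_{\Omega_1}\rho^2$), all the $\varphi$-dependent terms are $o(k r^{-m})$, hence absorbed into the error $k\, o(r^{-m})$. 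In fact the first-order term $DJ(W)[\varphi]$ is essentially $\int_{\R^N}(E + \text{lower order})\varphi$, which is $O(k\|E\|_*\|\varphi\|_*) = k\, o(r^{-m})$ by Lemma \ref{ee} and the smallness of $\|\varphi\|_*$; I would be careful here to use the equation satisfied by $W$ rather than expanding blindly. This reduces the problem to expanding $J(W)$ itself.

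The heart of the matter is then the expansion of $J(W)$ with $W = \sum_{j=1}^k W_j$. I would split $J(W)$ into the self-interaction terms and the cross-interaction terms. The self-interaction gives $k$ copies of the base energy: using that each $W_j$ solves $(-\Delta)^s w + w = w^p$, the quadratic part $\frac12\int W_j(-\Delta)^s W_j + \frac12\int W_j^2 - \frac1{p+1}\int W_j^{p+1}$ produces the constant $A_1$ (times $k$) up to the correction coming from the potential, namely $\frac12\int (V(|x|)-1) W_j^2$; by the radial symmetry this last integral is the same for every $j$, and using the expansion $V(|x|) = 1 + a|x|^{-m} + o(|x|^{-m})$ together with the decay $w(|x|)\sim A|x|^{-(N+2s)}$ and the condition $m < N+2s$ (so the integral is dominated by the region far from $q_j$ where $W_j \approx A|x - q_j|^{-(N+2s)}$ and $|x|\approx r$), one extracts the leading term $\frac{B_1}{r^m}$ with $B_1 = \frac{a}2 \int_{\R^N} w^2$ — the precise constant being recorded in Proposition \ref{EWA}. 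The cross terms involve $\int W_i^p W_j$ and $\int W_i W_j$ for $i\neq j$; by the interaction-estimate machinery (the summation bound displayed before Remark \ref{r1}, with $\beta = N+2s > 1$) these sum to $-\frac{B_2 k^{N+2s}}{r^{N+2s}}$ at leading order, the minus sign reflecting that bringing bumps together lowers the energy relative to $k$ free bumps, and the power $k^{N+2s}/r^{N+2s}$ coming from $\sum_{j\neq 1}|q_1 - q_j|^{-(N+2s)} \sim C(k/r)^{N+2s}$.

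I would then verify that on the interval $I_0$, where $r \asymp k^{(N+2s)/(N+2s-m)}$, the two competing terms $B_1 r^{-m}$ and $B_2 k^{N+2s} r^{-(N+2s)}$ are genuinely of the same order (both comparable to $k^{-m(N+2s)/(N+2s-m)}$), which is exactly what makes the reduced functional have an interior critical point; this is also why $I_0$ is the right window, and it is dictated precisely by the balance $m = (N+2s) - m\cdot\frac{N+2s}{N+2s-m}\cdot\frac{\ldots}{\ldots}$ — more transparently, by setting $k^{N+2s}/r^{N+2s} \sim 1/r^m$. The remaining error terms — the $o(|x|^{-m})$ in the potential, the sub-leading cross interactions, the terms $\frac{C}{r^{N+2s-\mu}}$ and $(k/r)^{(N+2s)p - \mu}$ type contributions from $E$ — all have to be checked to be $o(r^{-m})$ uniformly for $r \in I_0$, and this is where the condition \eqref{1.8} on $m$ is used (it guarantees $N+2s - \mu > m/2$, $(N+2s)p - \mu > (N+2s)/2$, etc., as already noted at the end of the proof of Lemma \ref{ee}).

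The main obstacle, as usual in Lyapunov--Schmidt energy expansions, is not any single estimate but the bookkeeping: one must show that \emph{every} discarded term is $o(r^{-m})$ uniformly over the (large, $k$-dependent) interval $I_0$, simultaneously controlling the potential error $o(|x|^{-m})$ globally (splitting $\R^N$ into $|x| \gtrsim r$ and $|x| \lesssim r$ as in Lemma \ref{ee}), the $\varphi$-quadratic term, and the multi-bump interaction sums. The delicate point is that because the profile $w$ has only polynomial decay, the interaction integrals $\int W_i^p W_j$ are not exponentially small and their leading asymptotics must be computed, not merely bounded — this is precisely the remark made after Theorem \ref{t2} that the polynomial decay forces care with every term. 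Once the expansion is in hand, producing a critical point of $F$ in the interior of $I_0$ is routine: the function $t\mapsto B_1 t^{-m} - B_2 k^{N+2s} t^{-(N+2s)}$ (with $t = r$) has a nondegenerate interior maximum, and the $o(r^{-m})$ perturbation does not destroy it for $k$ large, so a standard comparison/min-max argument over $I_0$ yields the desired $r = r_k$, whence $c = 0$ by Lemma \ref{l2} and Theorem \ref{t2} follows.
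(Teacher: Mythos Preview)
Your approach is correct and matches the paper's: reduce $F(r)$ to $J(W)$ modulo $k\,o(r^{-m})$ using $\|\varphi\|_* = o(r^{-m/2})$, then invoke Proposition~\ref{EWA} for the expansion of $J(W)$. One point deserves care: the quadratic term $\tfrac12 D^2J(W)[\varphi,\varphi] = \tfrac12\int\big(\varphi(-\Delta)^s\varphi + V\varphi^2 - pW^{p-1}\varphi^2\big)$ is \emph{not} bounded by $C\|\varphi\|_*^2\int\rho^2$ directly, since the nonlocal piece $\int\varphi(-\Delta)^s\varphi$ does not obey pointwise estimates; the paper (and you should) use the equation \eqref{3.2} satisfied by $\varphi$ to rewrite this as $\tfrac12\int(E+N(\varphi))\varphi$, which \emph{is} then controlled by $Ck(\|E\|_*+\|N(\varphi)\|_*)\|\varphi\|_* = k\,o(r^{-m})$ via Remark~\ref{r1}. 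The paper also organizes the Taylor expansion slightly differently --- writing the linear term as $\int\big[(-\Delta)^s U + VU - U_+^p\big]\varphi = c\sum_j\int Z_j\varphi = 0$ via orthogonality rather than bounding $-\int E\varphi$ --- but your bound $O(k\|E\|_*\|\varphi\|_*)$ works equally well.
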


\begin{proof}

Since $U=W+\varphi$, let us expand $J(U)$ at $W$ and get that
\[
\begin{split}
J(U)=&\ \frac12\int_{\R^N} U(-\Delta)^sU + V U^2 - \frac1{p+1}\int_{\R^N} U_+^{p+1}\\
=& J(W) + \int_{\R^N} \left[(-\Delta)^s U + VU - U_+^p  \right]\varphi - \frac12\int_{\R^N} \left(\varphi(-\Delta)^s\varphi + V \varphi^2 - p W^{p-1}\varphi^2\right)\\
& -\frac1{p+1}\int_{\R^N} \left((W+\varphi)_+^{p+1} - W^{p+1} - (p+1)W^p\varphi -\frac{p(p+1)}2 W^{p-1}\varphi^2 \right)\\
& + \int_{\R^N}\left(U_+^p-W^p-pW^{p-1}\varphi \right)\varphi.
\end{split}
\]
Since $\int_{\R^N} \varphi Z_j=0$ for all $j=1, \ldots, k$, the second term disappears.
From Remark \ref{r1}, we have
\begin{equation*}
\begin{split}
&\left|\int_{\R^N} \left(\varphi(-\Delta)^s\varphi + V \varphi^2 - p W^{p-1}\varphi^2\right)\right|\\
= &\int_{\R^N} \left|E+N(\varphi)\right| |\varphi|
\leq  C k\left(\|E\|_* + \|N(\varphi)\|_*\right) \|\varphi\|_* \int_{\Omega_1} \rho^2 \\
\leq &  C k\left(\|E\|_* + \|N(\varphi)\|_*\right) \|\varphi\|_* = k o(r^{-m}).
\end{split}
\end{equation*}
Similarly, it is easy to see that
\begin{equation*}
\begin{split}
&\left| \int_{\R^N} \left((W+\varphi)_+^{p+1} - W^{p+1}- (p+1)W^p\varphi -\F{p(p+1)}2 W^{p-1}\varphi^2 \right)  \right| \\
\le & C \int_{\R^N} |\varphi|^{\min\{p+1,3\}}
\leq  C k \|\varphi\|^{\min\{p+1,3\}}_*\int_{\Omega_1} \rho^2 =k o\left(r^{-m}  \right),
\end{split}
\end{equation*}
and
\begin{equation*}
\left|\int_{\R^N}\left(U_+^p-W^p-pW^{p-1}\varphi \right)\varphi\right|=O\left(\int_{\R^N}W^{p-1}\varphi^2 \right)=kO\left( \|\varphi\|_*^2\right)=ko(r^{-m}).
\end{equation*}
The proof is completed.
\end{proof}

\begin{proof}[Proof of Theorem \ref{t2}]
Now consider the following problem $\max_{r \in I_0}  F(r)$.
We want to verify that the maximum points lie in the interior of the interval $ I_0$. For this, let
\[
r_0=\left( \frac{(N+2s)B_2}{mB_1} \right)^{\frac1{N+2s-m}}k^{\frac{N+2s}{N+2s-m}} \in I_0
\]
for large positive constant $C_0$.
It is easy to show that for large $k$,
\[
F(r_0)= kA_1 + k^{1-\frac{(N+2s)m}{N+2s-m}}B_1\left( \frac{mB_1}{(N+2s)B_2}\right)^{\frac{m}{N+2s-m}}\frac{N+2s-m}{N+2s} + o\left(  k^{1-\frac{(N+2s)m}{N+2s-m}}  \right).
\]
On the other hand, it always holds that
\[
\begin{split}
F\left(\frac1{C_0}k^{\frac{N+2s}{N+2s-m}}\right)= &\ kA_1 + k^{1-\frac{(N+2s)m}{N+2s-m}}\left(B_1C_0^m-B_2C_0^{N+2s}  \right) +o\left(  k^{1-\frac{(N+2s)m}{N+2s-m}}  \right)\\
<&\ kA_1 + k^{1-\frac{(N+2s)m}{N+2s-m}}B_1\left( \frac{mB_1}{(N+2s)B_2}\right)^{\frac{m}{N+2s-m}}\frac{N+2s-m}{2(N+2s)}
\end{split}
\]
and
\[
\begin{split}
F(C_0k^{\frac{N+2s}{N+2s-m}})=&\ kA_1 + k^{1-\frac{(N+2s)m}{N+2s-m}}\left(\frac{B_1}{C_0^m}-\frac{B_2}{C_0^{N+2s}}  \right) +o\left(  k^{1-\frac{(N+2s)m}{N+2s-m}}  \right)\\
<&\ kA_1 + k^{1-\frac{(N+2s)m}{N+2s-m}}\frac{B_1}{C_0^m}+o\left(  k^{1-\frac{(N+2s)m}{N+2s-m}}  \right)\\
<&\ kA_1 + k^{1-\frac{(N+2s)m}{N+2s-m}}B_1\left( \frac{mB_1}{(N+2s)B_2}\right)^{\frac{m}{N+2s-m}}\frac{N+2s-m}{2(N+2s)},
\end{split}
\]
if we choose $C_0$ large enough such that
\[
B_1C_0^m-B_2C_0^{N+2s}<0, \qquad \frac{B_1}{C_0^m} < B_1\left( \frac{mB_1}{(N+2s)B_2}\right)^{\frac{m}{N+2s-m}}\frac{N+2s-m}{4(N+2s)}
\]
which can be  done because of $0 <m< N+2s. $
If we let $F(r_1)=\max_{r \in I_0}  F(r)$, then $r_1$ is an interior point of $I_0$ and thus $F'(r_1)=0$, which gives a critical point of $F(r)$.\par

Therefore Lemma \ref{l2} implies Theorem \ref{t2}.
\end{proof}

\section{Appendix: Energy expansion} \label{s6}
\setcounter{equation}{0}

In this section, the important expansion of the energy at $W$ is given. First we list the following lemmas, whose proofs can be found in \cite{WWY}.

\begin{lemma}\label{laa0}
For any $\alpha>0$,
\[
\sum_{j=1}^k \frac1{(1+|x-x_j|)^\alpha}\le  C + C \sum_{j=2}^k \frac1{|x_1-x_j|^\alpha}, \qquad \forall \ \ x\in \R^N
\]
where $C>0$ is a constant independent of $k$.
\end{lemma}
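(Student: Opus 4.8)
The plan is to bound the left-hand side by a sum of inverse powers of distances measured \emph{from a single point}, and then to use the cyclic symmetry of the configuration to move that point to $x_1$. First I would fix $x\in\R^N$ and pick any index $j_0=j_0(x)$ realizing $|x-x_{j_0}|=\min_{1\le j\le k}|x-x_j|$. For $j\ne j_0$, minimality and the triangle inequality give
\[
1+|x_j-x_{j_0}|\le 1+|x_j-x|+|x-x_{j_0}|\le 1+2|x_j-x|\le 2\bigl(1+|x_j-x|\bigr),
\]
hence $(1+|x-x_j|)^{-\alpha}\le 2^\alpha(1+|x_j-x_{j_0}|)^{-\alpha}\le 2^\alpha|x_j-x_{j_0}|^{-\alpha}$, while the single term $j=j_0$ is at most $1$ since $\alpha>0$. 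Summing over $j$ therefore yields
\[
\sum_{j=1}^k\frac1{(1+|x-x_j|)^\alpha}\le 1+2^\alpha\sum_{j\ne j_0}\frac1{|x_j-x_{j_0}|^\alpha}.
\]

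The second step is to observe that the right-hand side is independent of $j_0$ for the configuration at hand, namely $x_j=q_j=\bigl(r\cos\frac{2(j-1)\pi}{k},r\sin\frac{2(j-1)\pi}{k},\bm 0\bigr)$: one has $|q_i-q_j|=2r\bigl|\sin\frac{(i-j)\pi}{k}\bigr|$, which depends on $i$ and $j$ only through $i-j\bmod k$, so $\sum_{j\ne i}|q_i-q_j|^{-\alpha}$ takes the same value for every $i$, in particular the value $\sum_{j=2}^k|x_1-x_j|^{-\alpha}$. Substituting and setting $C=\max\{1,2^\alpha\}$ gives the claim. Note that this argument needs only $\alpha>0$ and imposes no lower bound on the mutual distances $|x_i-x_j|$ — the closest-point choice of $j_0$ absorbs the points near $x$ automatically.

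The one place where care is genuinely required is this last symmetry step: for an arbitrary (non-symmetric) family of points the inequality with $x_1$ singled out on the right is simply false, since $x_1$ could lie far from every other point while $x_{j_0}$ sits in a dense cluster. Thus the equality of $\sum_{j\ne i}|x_i-x_j|^{-\alpha}$ over all $i$ is really being used, and it is exactly the cyclic invariance of the $q_j$ that supplies it. (For a general configuration one would instead put $\max_{1\le i\le k}\sum_{j\ne i}|x_i-x_j|^{-\alpha}$ on the right and the rest of the argument is verbatim.) Beyond that the proof is a single triangle-inequality line, which is why the detailed computation is only cited from \cite{WWY}.
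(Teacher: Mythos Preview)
Your argument is correct. The paper itself does not supply a proof of this lemma but simply cites \cite{WWY}, so there is no in-paper argument to compare against; your two-step proof (nearest-index reduction via the triangle inequality, then cyclic symmetry to replace $j_0$ by $1$) is a clean self-contained version and gives the explicit constant $C=2^\alpha$. Your remark that the lemma as literally stated --- with $x_1$ singled out on the right for an \emph{arbitrary} point configuration --- is false without the symmetry of the $q_j$ is also well taken: the paper only ever invokes the lemma with $x_j=q_j$, and your proof makes precise exactly which property of that configuration (the invariance of $\sum_{j\ne i}|q_i-q_j|^{-\alpha}$ under the choice of $i$) is actually needed.
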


\begin{lemma}\label{laa2}
For any constant $0<\sigma<N-2$, there is a constant $C>0$, such that
\[
\int_{\R^N} \frac1{|y-z|^{N-2}}\frac1{(1+|z|)^{2+\sigma}}\mathrm dz\le \frac C{(1+|y|)^{\sigma}}.
\]
\end{lemma}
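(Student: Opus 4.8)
The plan is to treat $\int_{\R^N}|y-z|^{2-N}(1+|z|)^{-(2+\sigma)}\,\mathrm dz$ as a standard Newtonian-potential convolution and to estimate it by cutting $\R^N$ into regions determined by the relative sizes of $|y|$, $|z|$ and $|y-z|$. Write $R:=|y|$. First I would dispose of the range $R\le 1$: since the exponent $N-2$ is strictly less than $N$, the singularity of $|y-z|^{2-N}$ at $z=y$ is locally integrable, and at infinity the integrand behaves like $|z|^{2-N}|z|^{-(2+\sigma)}=|z|^{-N-\sigma}$, which is integrable because $\sigma>0$; hence on $\{R\le 1\}$ the left-hand side is bounded by a fixed constant while $(1+|y|)^{-\sigma}$ is bounded below, so the inequality holds for a suitable $C$.

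For $R\ge 1$ I would split $\R^N$ into four pieces. On $A_1=\{|z|\le R/2\}$ one has $|y-z|\ge R/2$, hence $|y-z|^{2-N}\le CR^{2-N}$, and passing to polar coordinates $\int_{A_1}(1+|z|)^{-(2+\sigma)}\,\mathrm dz\le CR^{N-2-\sigma}$ — this is exactly where the hypothesis $\sigma<N-2$ enters, guaranteeing $2+\sigma<N$ and thus this growth rate — so the contribution is $\le CR^{-\sigma}$. On $A_2=\{|y-z|\le R/2\}$ one has $|z|\ge R/2$, so $(1+|z|)^{-(2+\sigma)}\le CR^{-(2+\sigma)}$, while $\int_{|y-z|\le R/2}|y-z|^{2-N}\,\mathrm dz\le CR^{2}$, again giving $\le CR^{-\sigma}$. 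On $A_3=\{|z|\ge R/2,\ |y-z|\ge R/2,\ |z|\le 2R\}$ we have $|z|\sim R$, hence $(1+|z|)^{-(2+\sigma)}\le CR^{-(2+\sigma)}$, and since $|y-z|\le 3R$ on $A_3$, $\int_{|y-z|\le 3R}|y-z|^{2-N}\,\mathrm dz\le CR^{2}$, so $\le CR^{-\sigma}$. Finally on $A_4=\{|z|\ge 2R\}$ one has $|y-z|\ge|z|-|y|\ge|z|/2$, so the integrand is $\le C|z|^{-N-\sigma}$ and $\int_{|z|\ge 2R}|z|^{-N-\sigma}\,\mathrm dz\le CR^{-\sigma}$, where $\sigma>0$ is used. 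Since $A_1\cup A_2\cup A_3\cup A_4\supseteq\R^N$, summing the four bounds gives $\le CR^{-\sigma}=C(1+|y|)^{-\sigma}$ for $R\ge 1$, which completes the estimate.

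I do not expect a genuine obstacle here: the argument is pure bookkeeping with the Newtonian kernel. The only points needing a little care are that the two endpoint conditions in $0<\sigma<N-2$ each get used exactly once — the upper bound $\sigma<N-2$ in region $A_1$ and the lower bound $\sigma>0$ in region $A_4$ — that $N\ge 3$ is implicitly required for the statement to be meaningful, and that the constant in the bounded-$R$ regime is finite, all of which are immediate. Alternatively, since this is a known convolution inequality, one may simply quote the corresponding estimate from \cite{WWY}.
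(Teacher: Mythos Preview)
Your argument is correct: the four-region decomposition is the standard way to handle this Newtonian-potential convolution, and you have correctly identified where each endpoint of the hypothesis $0<\sigma<N-2$ is used. The paper itself does not prove this lemma but merely cites \cite{WWY} (and \cite{LN,WZ} for a more general version); your splitting argument is precisely the kind of proof one finds in those references, and you even note that citation option at the end.
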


The proof of this lemma, a more general one actually,  can also be found in \cite{LN,WZ}.

\begin{lemma}\label{l6.3}
For any constant $0\leq \sigma \leq \min\{\alpha,\ \beta\}$, there is a constant $C > 0$ such that, for any $i\neq j$,
\begin{equation*}
\F{1}{(1+|x-q_i|)^\alpha}\F{1}{(1+|x-q_j|)^\beta} \leq \F{C}{|q_i-q_j|^\sigma} \left[\frac{1}{(1+|x-q_i|)^{\alpha+\beta-\sigma}}+
\frac{1}{(1+|x-q_j|)^{\alpha+\beta-\sigma}}\right].
\end{equation*}
\end{lemma}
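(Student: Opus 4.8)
The plan is to deduce the estimate from nothing more than the triangle inequality together with a single case distinction, according to which of the two points $q_i$, $q_j$ is nearer to $x$. The only mild subtlety to keep track of is that the auxiliary exponents $\alpha-\sigma$ and $\beta-\sigma$ remain nonnegative, which is precisely what the hypothesis $0\le\sigma\le\min\{\alpha,\beta\}$ provides; everything else is monotonicity of $t\mapsto t^{c}$ for $c\ge0$.

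First I would record the elementary bound
\[
|q_i-q_j|\le|x-q_i|+|x-q_j|\le(1+|x-q_i|)+(1+|x-q_j|)\le 2\max\{1+|x-q_i|,\ 1+|x-q_j|\},
\]
valid for all $x\in\R^N$ and for $i\ne j$ (so that $|q_i-q_j|>0$). Suppose, without loss of generality, that $1+|x-q_i|\ge 1+|x-q_j|$. Then $1+|x-q_i|\ge\frac12|q_i-q_j|$, and since $\sigma\ge0$ this gives
\[
\frac1{(1+|x-q_i|)^{\sigma}}\le\frac{2^{\sigma}}{|q_i-q_j|^{\sigma}}.
\]
Next I would split off this factor from the left-hand side of the claim and estimate the remaining factor using $\alpha-\sigma\ge0$ and $1+|x-q_i|\ge1+|x-q_j|$:
\[
\frac1{(1+|x-q_i|)^{\alpha-\sigma}(1+|x-q_j|)^{\beta}}\le\frac1{(1+|x-q_j|)^{\alpha-\sigma}(1+|x-q_j|)^{\beta}}=\frac1{(1+|x-q_j|)^{\alpha+\beta-\sigma}}.
\]
Multiplying the last two displays yields
\[
\frac1{(1+|x-q_i|)^{\alpha}(1+|x-q_j|)^{\beta}}\le\frac{2^{\sigma}}{|q_i-q_j|^{\sigma}}\cdot\frac1{(1+|x-q_j|)^{\alpha+\beta-\sigma}},
\]
which is dominated by the right-hand side of the asserted inequality with $C=2^{\sigma}$.

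The opposite case $1+|x-q_j|\ge1+|x-q_i|$ is handled in exactly the same way after interchanging the roles of $i$ and $j$ (here one uses $\sigma\le\beta$ in place of $\sigma\le\alpha$), and it produces instead the term with $(1+|x-q_i|)^{\alpha+\beta-\sigma}$ in the denominator. Combining the two cases and taking $C=2^{\sigma}$ completes the proof. I do not expect any genuine obstacle: the argument is purely algebraic, and the constant is manifestly independent of $i$, $j$ and $k$ (indeed uniform once $\sigma$ varies over a bounded range, which is the case in every application of the lemma in this paper).
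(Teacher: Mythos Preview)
Your argument is correct: the case split according to which of $1+|x-q_i|$, $1+|x-q_j|$ is larger, combined with the triangle inequality and the nonnegativity of $\alpha-\sigma$ (respectively $\beta-\sigma$), gives the result with the explicit constant $C=2^{\sigma}$. The paper itself does not supply a proof of this lemma but simply refers the reader to \cite{WZ1}; your self-contained derivation is exactly the standard one and is in fact sharper in that it exhibits the constant explicitly.
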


The proof of the above lemma may be found in \cite{WZ1}.

Next we focus on the expansion of energy at $W$. Recall the positive least energy solution $w$ to (\ref{p1}).

\begin{proposition}\label{EWA}
It holds that
\begin{equation}
J(W)=k \left[A_1 + \frac{B_1}{r^m}-\frac{B_2k^{N+2s}}{r^{N+2s}} + o\left( r^{-m} + (\frac{k}{r})^{N+2s}  \right)\right],
\end{equation}
where
\[
A_1 =\left(\frac12-\frac{1}{p+1}\right)\int_{\R^N} w^{p+1}(x)dx, \quad B_1=\frac{a}2\int_{\R^N} w^2(x) dx
\]
and $B_2$ are all positive numbers.
\end{proposition}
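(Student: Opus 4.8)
The plan is to expand $J(W)$ by exploiting the equation solved by $w$, isolate the three main contributions $kA_1$, $kB_1r^{-m}$ and $-kB_2(k/r)^{N+2s}$, and show that everything else is a remainder controlled by the lemmas of this section together with the structural condition (\ref{1.8}) on $m$.

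First I would rewrite the quadratic part. Since $W>0$ and each $W_j=w(\cdot-q_j)$ solves $(-\Delta)^sW_j+W_j=W_j^p$, one has $(-\Delta)^sW=\sum_j(W_j^p-W_j)$, and after the two copies of $\sum_{i,j}\int W_iW_j$ cancel,
\begin{equation*}
J(W)=\frac12\sum_{i,j=1}^k\int_{\R^N}W_iW_j^p+\frac12\int_{\R^N}(V-1)W^2-\frac1{p+1}\int_{\R^N}W^{p+1}.
\end{equation*}
The diagonal part of the first sum is $\frac k2\int w^{p+1}$. For the last integral I would split $\R^N=\cup_j\Omega_j$, write $W=W_i+\sum_{j\ne i}W_j$ on $\Omega_i$, and apply the elementary bound $|(a+b)^{p+1}-a^{p+1}-(p+1)a^pb|\le C(a^{p-1}b^2+b^{p+1})$ for $a,b\ge0$, which gives $\int W^{p+1}=k\int w^{p+1}+(p+1)S+(\text{error})$, where $S:=\sum_{i\ne j}\int_{\R^N}W_i^pW_j=\sum_{i\ne j}\int_{\R^N}W_iW_j^p$ (equal by relabelling $i\leftrightarrow j$). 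Combining, the $\int w^{p+1}$ terms assemble into $kA_1$ with $A_1=(\frac12-\frac1{p+1})\int w^{p+1}$, and the interaction contribution is exactly $-\frac12S$.

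Next I would evaluate the two remaining pieces. For the potential term, by symmetry $\int(V-1)W^2=k\int(V-1)W_1^2+\sum_{i\ne j}\int(V-1)W_iW_j$; on the bulk of the mass of $W_1^2$ one has $|x|=r(1+o(1))$ and $V(|x|)-1=a|x|^{-m}+o(|x|^{-m})$, so using $w^2\in L^1$ (as $2(N+2s)>N$) and $m<N+2s<N+4s$ to discard the tail gives $\int(V-1)W_1^2=\frac a{r^m}\int w^2+o(r^{-m})$, while the off-diagonal sum is $o(1)\sum_{i\ne j}\int W_iW_j=o(k(k/r)^{N+2s})$ because $|V-1|=o(1)$ there; hence $\frac12\int(V-1)W^2=kB_1r^{-m}+o(kr^{-m})+o(k(k/r)^{N+2s})$ with $B_1=\frac a2\int w^2$. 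For $S$, the key asymptotic is that, since $|q_i-q_j|\to\infty$ uniformly and $w(y)=A|y|^{-(N+2s)}(1+o(1))$ with $w,w^p\in L^1$, one has $\int_{\R^N}W_iW_j^p=(\int w^p)\,w(|q_i-q_j|)(1+o(1))$; summing via $|q_1-q_j|=2r\sin(\pi(j-1)/k)$ together with the estimate recorded just before Remark \ref{r1} (note $N+2s>1$) yields
\begin{equation*}
S=k\,A\Big(\int_{\R^N}w^p\Big)\sum_{j\ne1}\frac{1+o(1)}{|q_1-q_j|^{N+2s}}=2kB_2\Big(\frac kr\Big)^{N+2s}(1+o(1)),\qquad B_2>0.
\end{equation*}
Assembling the three pieces and absorbing all remainders into $o(r^{-m}+(k/r)^{N+2s})$ (using $r\in I_0$) yields the claimed expansion.

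The bookkeeping of the main terms is routine; the \emph{main obstacle} is proving that every cross term is genuinely of lower order. Concretely one must control: (i) the error of restricting the $\Omega_i$-pieces back to $\R^N$, since $\R^N\setminus\Omega_i$ lies at distance $\gtrsim r\sin(\pi/k)\sim r/k$ from $q_i$, which has to be weighed against $(k/r)^{N+2s}$; (ii) the binomial remainders $a^{p-1}b^2+b^{p+1}$ integrated against the slowly decaying bumps; and (iii) the sub-leading part of $\int W_iW_j^p$, where the polynomial (not exponential) decay (\ref{expw}) of $w$ must be used carefully. Each is handled with Lemmas \ref{laa0}, \ref{laa2}, \ref{l6.3}, and it is precisely hypothesis (\ref{1.8}) on $m$ — the same inequalities $N+2s-\mu>m/2$, $(N+2s)p-\mu>(N+2s)/2$, $N+2s-\mu/p>1$ already exploited in Lemma \ref{ee} — that forces these remainders to be $o(r^{-m})$, so that the $r^{-m}$ attraction toward infinity genuinely competes with the $(k/r)^{N+2s}$ mutual repulsion of the bumps; the positivity of $B_2$ (from positivity of $w$) is what makes this competition, and the subsequent maximization argument, work.
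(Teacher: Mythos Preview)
Your proposal is correct and follows essentially the same route as the paper: rewrite the quadratic form via $(-\Delta)^sW_j+W_j=W_j^p$, expand $W^{p+1}$ on each sector $\Omega_i$ with the binomial inequality, and identify the interaction contribution as $-\frac12 S$ with $S$ evaluated through the polynomial asymptotics (\ref{expw}) of $w$ together with the sum formula (\ref{sum}). The only cosmetic difference is that the paper works on $\Omega_1$ throughout and absorbs the $\Omega_1$-versus-$\R^N$ discrepancy implicitly, whereas you single it out as your obstacle (i); the error terms and the lemmas invoked are identical.
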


\begin{proof}
Recall that
\[
q_j = \left(r\cos\frac{2(j-1)\pi}{k}, r\sin\frac{2(j-1)}{k}, \bm 0\right), \qquad j=1, \ldots, k,
\]
where $\bm 0$ is the zero vector in $\R^{N-2}$, $r \in \left[\frac1{C_0}k^{\frac{N+2s}{N+2s-m}}, C_0k^{\frac{N+2s}{N+2s-m}} \right]$ for a large positive constant $C_0$. By direct calculous, we get that
\[
|q_1-q_j|=2r\sin\frac{(j-1)\pi}{k}, \qquad 0 < c' \le \frac{\sin\frac{(j-1)\pi}{k}}{\frac{(j-1)\pi}{k}} \le c'',
\]
from which we can find that for any $\ell >1$,
\begin{equation}\label{sum}
\sum\limits_{j=2}^k \frac1{|q_j-q_1|^\ell}=\frac1{(2r)^\ell} \sum\limits_{j=2}^k \frac1{(\sin\frac{(j-1)\pi}{k})^\ell} = C_{\ell} \left(\F{k}{r}\right)^\ell + o\left(\left(\F{k}{r}\right)^\ell  \right),
\end{equation}
where $C_l >0.$

Denote
\[
W_j(x)=w(x-q_j), \quad j=1, \ldots, k, \qquad W(x)=\sum\limits_{j=1}^k W_j(x).
\]
\[
\begin{split}
J(W_1)&=\frac12\int_{\R^N} \left[w(x-q_1)(-\Delta)^sw(x-q_1)+V(|x|)w^2(x-q_1)\right]\mathrm dx-\frac1{p+1}\int_{\R^N}w^{p+1}(x-q_1)\mathrm dx\\
&=J_1(w)+\frac12\int_{\R^N}\left(V(|x|)-1\right)w^2(x-q_1)dx\\
&=\left(\frac12-\frac1{p+1}   \right)\int_{\R^N}w^{p+1}dx +\frac12\int_{\R^N}\left(V(|x-q_1|)-1\right)w^2(x)dx.
\end{split}
\]

For any $\alpha >0$, $x \in B_{r/2}(0)$,  since
\begin{equation}\label{exp}
\frac1{|x-q_1|^\alpha} =\frac1{|q_1|^\alpha}\left[1+O\left(\frac{|x|}{|q_1|}\right)   \right],
\end{equation}
we deduce that
\begin{align}
&\ \frac12\int_{\R^N}\left(V(|x-q_1|)-1\right)w^2(x) \mathrm dx=\left(\int_{\{x| |x|< \frac{r}{2}\}}  + \int_{\{x| |x|\ge \frac{r}{2}\}} \right)\left(V(|x-q_1|)-1\right)w^2(x) \mathrm dx \nonumber \\
=&\ \frac12\int_{\{x| |x|< \frac{r}{2}\}} \left[\frac{a}{|x-q_1|^m} + o\left(\frac1{|x-q_1|^{m}}  \right)     \right] w^2(x)dx + O\left( \int_{\{x| |x|\ge \frac{r}{2}\}} w^2(x)dx  \right) \nonumber \\
=&\ \frac{a}{2|q_1|^m}\int_{\{x| |x|< \frac{r}{2}\}} w^2(x)dx  + O\left(r^{-(m+1)}\int_{\{x| |x|< \frac{r}{2}\}} |x|w^2(x)dx  +  r^{-(N+4s)} \right) +o(r^{-m}) \nonumber \\
=&\ \frac{B_1}{r^m} + o(r^{-m})+ O\left(r^{-(N+4s)}\right)=\frac{B_1}{r^m} + o(r^{-m}),\label{vw}
\end{align}
where the positive constant $B_1 = \frac{a}2\int_{\R^N} w^2(x)\mathrm dx$.
Recall that
\[
\Omega_j=\left\{ y =(y',y'') \in \R^2 \times \R^{N-2}: \quad \left\langle \frac{y'}{|y'|}, \frac{q_j}{|q_j|}\right\rangle \ge \cos\frac{\pi}{k}\right\}.
\]
By symmetry, we can deduce that
\[
\begin{split}
J(W)=&\frac12\int_{\R^N} W(-\Delta)^s W + V(|x|)W^2 - \frac{1}{p+1}\int_{\R^N} W^{p+1}\\
=& \frac12\int_{\R^N} W\left((-\Delta)^sW + W \right) + \frac12\int_{\R^N} \left(V(|x|)-1\right)W^2- \frac{1}{p+1}\int_{\R^N} W^{p+1}\\
=&\frac12\int_{\R^N} W \sum\limits_{j=1}^k W^p_j + \frac{k}2 \int_{\Omega_1}\left(V(|x|)-1\right)W^2- \frac{k}{p+1}\int_{\Omega_1} W^{p+1}\\
=&\frac{k}2\int_{\R^N} w^{p+1} + \frac{k}{2}\sum\limits_{j=2}^k \int_{\R^N}W_1^pW_j +  \frac{k}2 \int_{\Omega_1}\left(V(|x|)-1\right)W^2- \frac{k}{p+1}\int_{\Omega_1} W^{p+1}.
\end{split}
\]
Now let us do the computations term by term.

With (\ref{exp}) and (\ref{expw}) at hand, we find that
\begin{align}
&\ \sum\limits_{j=2}^k \int_{\R^N}W_1^pW_j =\sum\limits_{j=2}^k \int_{\R^N} w^p(|x-q_1|) w(|x-q_j|)dx \nonumber\\
=&\ \sum\limits_{j=2}^k \int_{\R^N} w^p(|x|) w(|x + q_1-q_j|)dx \nonumber \\
=&\ \sum\limits_{j=2}^k \int_{\{x | |x|\le |q_1-q_j|/2 \}} w^p(|x|) \left[ \frac{A}{|x+q_1-q_j|^{N+2s}} + o\left(\frac1{|x+q_1-q_j|^{N+2s}} \right)\right]dx \nonumber\\
& + \sum\limits_{j=2}^k \int_{\{x | |x|\ge |q_1-q_j|/2 \}} O\left(\frac1{|q_1-q_j|^{(N+2s)p}} \right)w(|x + q_1-q_j|)    dx \nonumber\\
=&\ \sum\limits_{j=2}^k \frac{A}{|q_1-q_j|^{N+2s}} \int_{\R^N} w^p(x) dx +o\left(\sum\limits_{j=2}^k\frac1{|q_1-q_j|^{N+2s}}   \right)+ O\left(\sum\limits_{j=2}^k  \frac1{|q_1-q_j|^{(N+2s)p}}\right) \nonumber  \\
=&\ \sum\limits_{j=2}^k \frac{\widetilde{B}_2}{|q_1-q_j|^{N+2s}}+ o\left((\frac{k}{r} )^{N+2s}  \right)\label{inter}
\end{align}
where the positive constant $\widetilde{B}_2 = A\int_{\R^N} w^p.$

For any $x \in \Omega_1$, it is obvious  that $|x-q_j| \ge |x-q_1|$ and $|x-q_j| \geq |q_j-q_1|/2$ for $j=2, \ldots, k$. Then for any $0\le \alpha \le N+2s$,
\[
W_j(x) \le \frac{C}{\left(1 + |x-q_j|\right)^{N+2s}}  \le \frac{C}{\left(1 + |x-q_1|\right)^{\alpha}|q_1-q_j|^{N+2s-\alpha}}.
\]
Hence for any $0\le \alpha< N+2s-1$,
\begin{equation}
\sum\limits_{j=2}^k W_j(x)=O\left(\frac{1}{\left(1 + |x-q_1|\right)^{\alpha}}  \sum\limits_{j=2}^k \frac1{|q_1-q_j|^{N+2s-\alpha}}     \right)=O\left(\frac{1}{\left(1 +
|x-q_1|\right)^{\alpha}}\left(\frac{k}{r}\right)^{N+2s-\alpha}\right).
\end{equation}
Now we can deduce that
\begin{align}
&\ \frac{1}2 \int_{\Omega_1}\left(V(|x|)-1\right)W^2 =\frac{1}2 \int_{\Omega_1}\left(V(|x|)-1\right) \left(W_1 + \sum\limits_{j=2}^k W_j  \right)^2 \nonumber \\
=&\ \frac{1}2 \int_{\Omega_1}\left(V(|x|)-1\right)W_1^2 +  O\left( \int_{\Omega_1}\left|V(|x|)-1 \right|W_1\sum\limits_{j=2}^k W_j + (\frac{k}{r})^{2N+4s-2\alpha}\int_{\Omega_1} \frac1{(1 +
|x-q_1|)^{2\alpha}} \right) \nonumber \\
=&\ \frac{B_1}{r^m} + o(r^{-m})+ O\left(\left(\frac{k}{r}\right)^{N+2s}\int_{\Omega_1} \left|V(|x|)-1 \right|W_1 +(\frac{k}{r})^{N+3s}\right) \nonumber \\
=&\ \frac{B_1}{r^m} + o\left(r^{-m}+ (\frac{k}{r})^{N+2s}\right)+(\frac{k}{r})^{N+2s}O\left(\int_{\Omega_1} \left|V(|x|)-1 \right|w(x-q_1) \right) \nonumber \\
=&\ \frac{B_1}{r^m} + o\left(r^{-m}+ (\frac{k}{r})^{N+2s}\right)+ (\frac{k}{r})^{N+2s}O \left(\left(\int_{\{x | |x| \le \frac{r}2\}} + \int_{\{x | |x| \ge \frac{r}{2}\}}\right) \left|V(x)-1 \right|w(x-q_1)
\right) \nonumber \\
=&\ \frac{B_1}{r^m} + o\left(r^{-m}+ (\frac{k}{r})^{N+2s}\right)+  (\frac{k}{r})^{N+2s}O\left(\frac1{r^{s}} + \frac1{r^m}  \right) \nonumber\\
=&\ \frac{B_1}{r^m} + o\left(r^{-m}+ (\frac{k}{r})^{N+2s}\right), \label{poten}
\end{align}
where we choose $\alpha =\frac{N+s}{2}$.

For the last term in the energy $J(W)$, it is not difficult to check that
\begin{equation}\label{EW}
\begin{split}
&\frac{1}{p+1}\int_{\Omega_1} W^{p+1}=\frac{1}{p+1}\int_{\Omega_1} \left(W_1 + \sum\limits_{j=2}^k W_j    \right)^{p+1}\\
=&\frac{1}{p+1}\int_{\Omega_1}W_1^{p+1} + \int_{\Omega_1}W_1^p\sum\limits_{j=2}^k W_j +
O\left(\int_{\Omega_1} W_1^{p-1}(\sum\limits_{j=2}^k W_j)^2 \right) +O\left(\int_{\Omega_1}(\sum\limits_{j=2}^k W_j)^{p+1}    \right)\\
=&\frac{1}{p+1}\int_{\Omega_1}W_1^{p+1} + \int_{\Omega_1}W_1^p\sum\limits_{j=2}^k W_j +
   O\left(\left(\sum\limits_{j=2}^k \frac1{|q_j-q_1|^{\F{N}{2}+2s}}\right)^2\right) \\
   &+ O\left(\left(\sum\limits_{j=2}^k \frac1{|q_j-q_1|^{N+2s-\F{N+(p-1)s}{p+1}}}\right)^{p+1}\right)\\
=&\frac{1}{p+1}\int_{\R^N} w^{p+1} + \sum_{j=2}^k \frac{\widetilde{B}_2}{|q_1-q_j|^{N+2s}} + O\left( (\frac{k}r)^{N+4s}  \right) .
\end{split}
\end{equation}

Combining (\ref{inter}), (\ref{poten}) and (\ref{EW}), we get the desired expansion of energy
\begin{equation}\label{expanj}
J(W)=k\left[A_1 + \frac{B_1}{r^m}- \frac12\sum_{j=2}^k \frac{\widetilde{B}_2}{|q_1-q_j|^{N+2s}} + o\left( r^{-m} + (\frac{k}{r})^{N+2s}  \right)\right],
\end{equation}
where
\[
A_1 =\left(\frac12-\frac{1}{p+1}\right)\int_{\R^N} w^{p+1}(x)\mathrm dx, \quad B_1=\frac{a}2\int_{\R^N} w^2(x) \mathrm dx, \quad \widetilde{B}_2 = A\int_{\R^N} w^p.
\]

With (\ref{sum}) at hand, we finished the proof.

\end{proof}

\noindent {\bf Acknowledgement.} Wang is supported by NSFC
(Project11371254). Zhao is supported by NSFC (Project 11101155) and
by the Fundamental Research Funds for the Central Universities.

\end{document}